\pgfplotsset{compat=newest}
\tikzstyle{process} = [rectangle, minimum width=2cm, minimum height=0.7cm,text centered, draw=black, fill=white]
\tikzstyle{processlarge} = [rectangle, minimum width=2.6cm, minimum height=1.3cm,text centered, draw=black, fill=white]
\pgfplotsset{plot coordinates/math parser=false}
\newlength\fheight
\newlength\fwidth
\newtheorem{theorem}{Theorem}
\newtheorem{lemma}{Lemma}
\newtheorem{remark}{Remark}
\title{Numerical Model Construction with Closed Observables} 
\author{Felix Dietrich\footnotemark[2]\ \footnotemark[3]\ \footnotemark[4]
\and Gerta K\"{o}ster\footnotemark[3]
\and Hans-Joachim Bungartz\footnotemark[4]}
\begin{document}

\maketitle

\renewcommand{\thefootnote}{\fnsymbol{footnote}}

\footnotetext[3]{Munich University of Applied Sciences, Lothstr. 64, 80335 Munich, Germany}
\footnotetext[4]{Technische Universit\"{a}t M\"{u}nchen, Boltzmannstr. 3, 85748 Garching, Germany}
\footnotetext[2]{{felix.dietrich@tum.de}. Questions, comments, or corrections
to this document may be directed to that email address. This work was partially funded by the German Federal Ministry of Education and Research through the project MultikOSi on assistance systems for urban events -- multi criteria integration for openness and safety (Grant No. 13N12824).}

\renewcommand{\thefootnote}{\arabic{footnote}}


\begin{abstract}
Performing analysis, optimization and control using simulations of many-particle systems is computationally demanding when no macroscopic model for the dynamics of the variables of interest is available. In case observations on the macroscopic scale can only be produced via legacy simulator code or live experiments, finding a model for these macroscopic variables is challenging.

In this paper, we employ time-lagged embedding theory to construct macroscopic numerical models from output data of a black box, such as a simulator or live experiments. Since the state space variables of the constructed, coarse model are dynamically closed and observable by an observation function, we call these variables \textit{closed observables}.
The approach is an online-offline procedure, as model construction from observation data is performed offline and the new model can then be used in an online phase, independent of the original.

We illustrate the theoretical findings with numerical models constructed from time series of a two-dimensional ordinary differential equation system, and from the density evolution of a transport-diffusion system.
Applicability is demonstrated in a real-world example, where passengers leave a train and the macroscopic model for the density flow onto the platform is constructed with our approach. If only the macroscopic variables are of interest, simulation runtimes with the numerical model are three orders of magnitude lower compared to simulations with the original fine scale model.
We conclude with a brief discussion of possibilities of numerical model construction in systematic upscaling, network optimization and uncertainty quantification.
\end{abstract}



\pagestyle{myheadings}
\thispagestyle{plain}
\markboth{F.~DIETRICH, G.~K\"{O}STER AND H.-J.~BUNGARTZ}{MODEL CONSTRUCTION WITH CLOSED OBSERVABLES}


\section{Introduction}

\subsection{Multiple scale systems}

Many-particle systems often exhibit dynamics on several temporal and spatial scales. 
These systems have many degrees of freedom but the interesting variables for applications (optimization, control, prediction and analysis) are often much fewer and change slowly with time.
In this case it is desirable to find a model for these slow, coarse variables.
A \textit{model} according to our definition consists of a state space $A$, a dynamic $f:A\to A$, and an observer function $y:A\to\mathbb{R}^m$ with $m\in\mathbb{N}$. The states $x_k\in A$ are generated by iterative application of the dynamic, starting with a given initial state $x_0$, so that
\begin{equation}\label{eq:iterative_model}
x_k=f^k(x_0).
\end{equation}
The initial states usually are contained in a subset $A_0\subset A$. Note that this definition includes continuous models with flow maps $F_t$, where we can choose a certain $\Delta t>0$ to get the iterative version (Eq. \ref{eq:iterative_model}) via $f=F_{\Delta t}$.

Consider for example a train station, where passengers get on and off trains and platforms. Such a system can be modeled at a fine temporal and spatial scale, where individuals can be distinguished and react to others in split-seconds. This accurate description might not be ideal for a safety officer controlling the number of persons on the platform. In that case, a coarse description changing every few seconds and on the whole platform hides unnecessary complexity and still provides all important information. At an even coarser scale, a building manager might want to know at which time of the day the station is most crowded.


\subsection{Extracting macroscopic dynamics}
Mathematically, slow-fast systems with well-separated scales can be handled with Fenichel and perturbation theory \cite{fenichel-1972}, as well as homogenization and averaging techniques \cite{givon-2004}.
If the microscopic description is only available as black-box legacy code, or for systems with complex descriptions at the fine scale, these analytic techniques cannot be applied or use too many additional assumptions for the transition to the macroscopic scale. In this case, numerical analysis is a good choice. In direct simulations, many-particle systems typically need large amounts of computing resources due to the fine scales and thus again complicate analyses on the coarse scale.
If the complex system cannot be simulated with all degrees of freedom, dimension and model reduction techniques are needed (both linear \cite{moore-1981,sirisup-2005} and nonlinear methods \cite{barrault-2004,givon-2004,lieberman-2010,e-2011,peherstorfer-2014} are available, also see references therein). The complete fine scale can be reconstructed at all times but is not used in its entirety for simulations. The equation-free framework \cite{kevrekidis-2009}, heterogeneous multi-scale methods \cite{engquist-2003} and other multi-scale analysis techniques \cite{e-2011} take a different approach and are designed with the premise that only the coarse variables are interesting. They assume the existence of a coarse system and use fine scale simulations to estimate the coarse dynamics.

The main problem underlying all methods above is that the coarse model is unknown. Regarding this problem, Takens and Ruelle started to develop the theory of time-lagged embedding between 1970 and 1980 \cite{ruelle-1971,takens-1981}, which enables the extraction of dynamical systems for coarse variables from scalar time series data. See \cite{deyle-2011} for the multivariate case, and \cite{monroig-2009} for extraction of input-output systems from observations. The main result of Takens and Ruelle states that observations  of a system provide enough information to reconstruct the dynamics on the attractors of the system. This is possible if several future (or past) observations are used as coordinates in a high-dimensional embedding.
Research following this result has focused on using the manifold in the high-dimensional space for analysis and time series prediction \cite{sauer-1994}, often using eigenvalue decomposition to compute a low-dimensional parametrization. This linear decomposition is also known as Singular Value Decomposition, Principal Component Analysis,  Karhunen–Lo\'{e}ve  Transform, and Proper Orthogonal Decomposition.
Recently, the nonlinear parametrization of the time-lagged manifold with diffusion maps \cite{coifman-2006} has been found to be optimal, in the sense that diffusion maps recover the most stable Oseledets subspace of the dynamical system \cite{berry-2013}. A re-parametrization with diffusion maps also separates the different time scales of the dynamic on the attractor, similar to a Fourier decomposition \cite{berry-2013}. 

The analyses of the time-lagged manifold mostly focused on properties and insights into the dynamics of the original system. The possibility to construct models directly from the manifold coordinates has been largely ignored.  
However, Takens and Ruelle state that the new variables on the time-lagged manifold, that is, the current state of a system, contain enough information to advance in time \cite{ruelle-1971,takens-1981}. This is the basic principle of a time-autonomous model. There is no need to completely describe all states for all time; The formulation of the model, including the procedure to advance by one step (discrete or infinitesimally), is enough to \textit{generate} the evolution over time.
Hence, the problem is to find variables that contain enough information to advance the system in time without needing more information. If there are such variables, this property is known under many names, for example \textit{closure} \cite{kevrekidis-2009}, \textit{Markov property} or \textit{memory-less} \cite{ethier-1986}, \textit{dependent} or \textit{explanatory variables}. Here, we use the term \textit{closure}.

\subsection{Numerical model construction}

In this paper, we show how the combination of time-lagged embedding, diffusion maps and an observer function can be used to construct coarse, time-autonomous models numerically (see Figure \ref{fig:problem}). The variables on the time-lagged manifold contain enough information to advance in time, hence the system is closed. The variables are also observable through an observer function that we create numerically, hence we call the variables \textit{closed observables}.

The constructed numerical models are one abstraction level above the equation-free computations. The modeling process starts with the fine scale, then possibly a model reduction step to be able to run the simulator for sufficiently long time periods. After this, the equation-free framework can be employed to construct the coarse observations over an even longer time period. Finally, these observations can be used with our approach to actually construct a numerical model.
It is not necessary to go through all steps to be able to construct the numerical model, any sequence of observations of a system over time can be used.
The key point is that the actual modeling of the original system must only occur in the first, fine scale step. Given enough accurate observations, all models on coarser scales can then be constructed automatically.

\begin{figure}[ht]
\centering
\setlength\fheight{0.45\textwidth}
\setlength\fwidth{0.45\textwidth}
\fbox{
\begin{tikzpicture}
\node (input)[process]{input};
\node (input2)[process, right=5em of input]{$\phi$(input)};

\draw [thick,->] (input)--(input2);

\node (model)[processlarge, below=3em of input,fill=lightgray,align=center]{original model\\$F(x_n)=x_{n+1}$};
\node (newmodel)[processlarge, below=3em of input2,fill=lightgray,align=center]{numerical model\\$G(\phi_n)=\phi_{n+1}$};

\node (output)[process, below=3em of model]{output};
\node (output2)[process, below=3em of newmodel]{output'};

\draw [thick,->](input)--(model) node [midway,left] {$x_0$};
\draw [thick,->](input2)--(newmodel) node [midway,right] {$\phi_0$};
\draw [thick,->](newmodel)--(output2) node [midway,right] {observe $\phi_n$};
\draw [thick,->](model)--(output) node [midway,left] {observe $x_n$};
\draw [dashed,->](model)--(newmodel);

\draw node at ($(output)!0.5!(output2)$) {$\approx$};
\end{tikzpicture}}
\caption{\label{fig:problem}A given model (left) with dynamic $F$ and current state $x_n$ has to be transformed into a new model with dynamic $G$ and state $\phi_n$. The dimension of the new state space should be much smaller than the dimension of the original state space, but it still has to produce approximately the same output. All solid arrows depict functions, the dashed arrow indicates that the new model is created using the output of the original model.}
\end{figure}
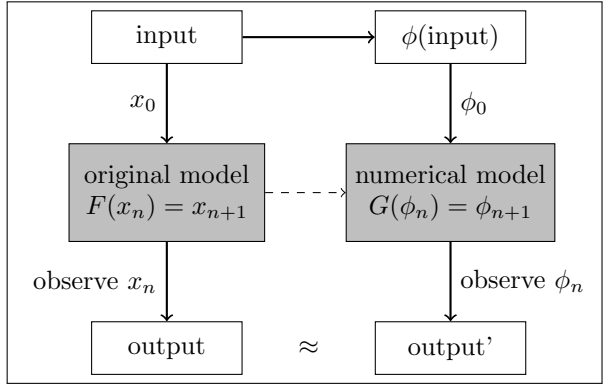

We assume the simulation of the complex system is possible but very costly. We also assume that the interesting output \textit{changes with time}. This dynamic aspect is a distinction from surrogate models \cite{bandler-2004}, which are commonly used to construct a numerical model for the response to a certain input. Surrogate models are not trying to approximate the dynamics but the mapping from input to output of a system. This mapping might need a dynamic process internally, but the resulting surrogate model ignores the process.

Our paper addresses the issue of numerically constructing a model for a \textit{dynamic} process. We state conditions when the model needs less storage than the original observations (Theorem \ref{thm:storagereduction}). The model approximates the output of the original via an observer function. We do not modify the original model equations, but directly work with the output data.
For models with differential equations, we employ the flow map with positive time delta to generate a discrete system.

The construction is different from previous approaches with diffusion maps \cite{coifman-2008}, where a map back into the high-dimensional space is always necessary.
In real applications, the output space is never sampled completely, and interpolation or approximation of the available values are necessary. Also, the original model must be queried many times to be able to construct a numerical model with sufficient accuracy. This is discussed in the numerical part of the paper in \S\ref{sec:algorithms}, together with an error analysis.

\subsection{Real-world example}\label{sec:intro_example}
We use the example shown in Figure \ref{fig:train station} to motivate the applicability of our approach. Section \ref{sec:real world example} covers this example in detail. A train enters a train station where passengers are already waiting on the platform. In this scenario, it is important to know the number of passengers over time both inside and outside the train, because very dense situations or a sudden change of density can be dangerous.
We use the recently proposed Gradient Navigation Model \cite{dietrich-2014} to describe and simulate the interactions between passengers and the geometry on the fine scale. Related to our general problem (Figure \ref{fig:problem}), the state space $A$ contains elements $x$ which hold all positions, velocities and parameters of all passengers. The function $F$ is the simulator implementing the Gradient Navigation Model. We observe the two numbers, persons on the train and on the platform, with an observer function $y$ on the current state $x$, so that $y(x)=(N_T,N_P)\in\mathbb{N}_0^2$.
\begin{figure}[ht]
\centering
\includegraphics[width=0.45\textwidth]{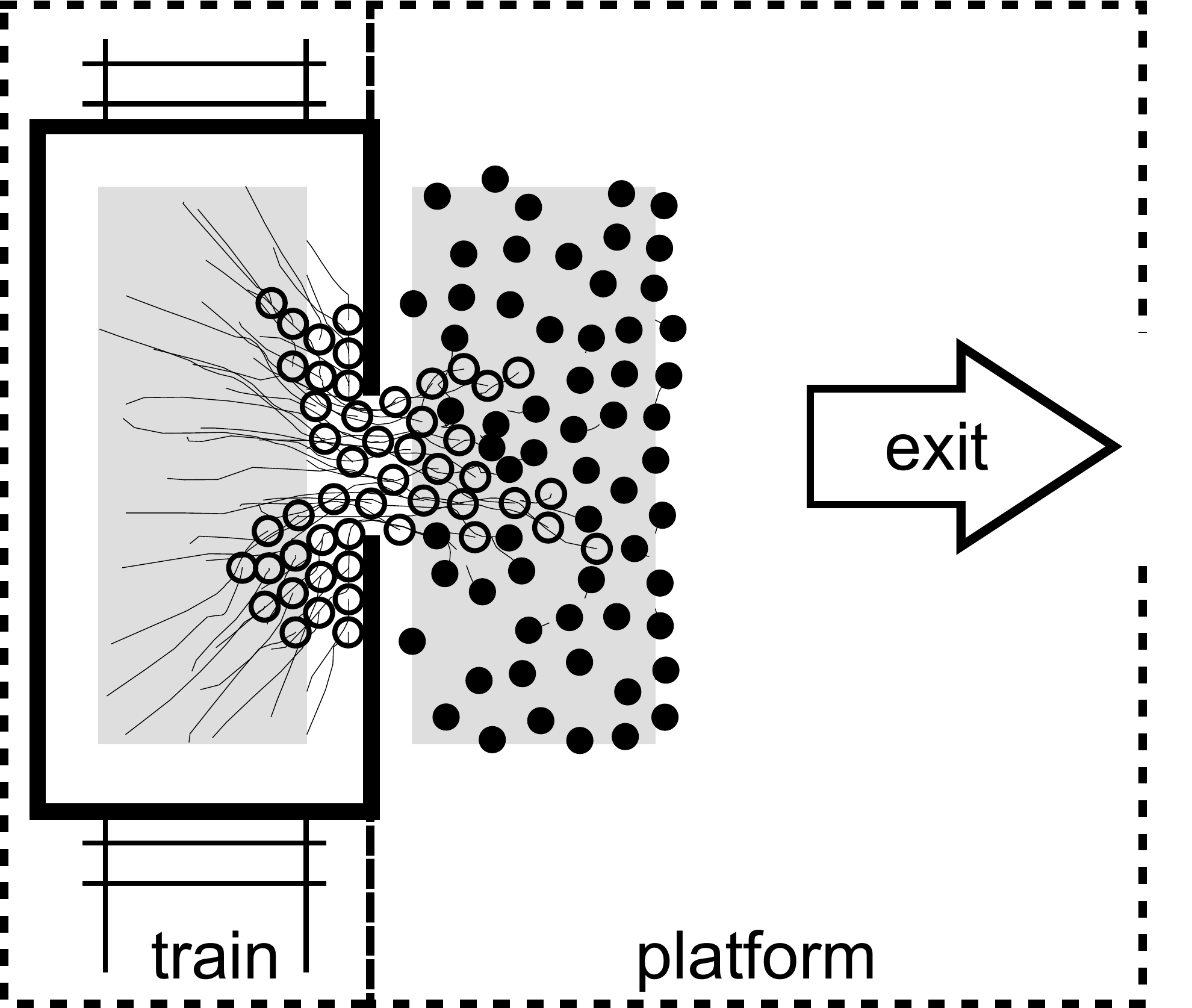}
\caption{\label{fig:train station}Setup for the real-world example. A train (left) enters the train station and passengers try to leave through the crowd of waiting passengers. Both the number of passengers inside and outside of the train are modeled using closed observables.}
\end{figure} 
The passengers can be on the train or on the platform, but nowhere else. With closed observables, the resulting numerical model automatically extracts the underlying dynamics of the number of persons on both the train and the platform. In the notation from Figure \ref{fig:problem}: we construct a two-dimensional, dynamical system where
\begin{equation}
\phi_{n+1}=G(\phi_n),\ \phi_n\in\mathbb{R}^2,\ \phi(x_0)=\phi_0
\end{equation}
and with an observer function $\tilde{y}$ so that $\tilde{y}(\phi_n)\approx y(x_n)$.
After the construction of the numerical model, we use it to analyze the chance of passengers to get out of the train within 25 and 50 seconds, given varying initial numbers of waiting passengers on the platform. The analysis, including all simulations with the constructed model, completes in seconds, whereas the same analysis with the original model would take several hours.
The microscopic, individual scale and the macroscopic, platform-wide scale in this scenario differ by $\mathcal{O}(10^2)$ both temporally and spatially (see \S\ref{sec:real world example}). This means we can use closed observables to capture a system that is about 100 times slower and larger than the microscopic system that creates the output.

\subsection{Outline}
We introduce the mathematical framework of Takens' theorem and diffusion maps in \S\ref{sec:mathematical framework}. Next, numerical model construction is presented and two main theorems regarding approximation and storage efficiency are stated and proved. In \S\ref{sec:algorithms}, the theoretical construction of the new model is complemented with numerical algorithms and an error analysis. Two examples in \S\ref{sec:examples} illustrate the ideas of the construction, the first is a two-dimensional system of ordinary differential equations and the second is a transport-diffusion system where the output is a whole function, the density over the whole domain. Section \ref{sec:real world example} describes the train station example.
We conclude in \S\ref{sec:conclusion} and add remarks on how the construction of the new model can be improved further, and how it can be used in other applications and numerical analysis.

\FloatBarrier
\section{Mathematical framework}\label{sec:mathematical framework}
The following explanations are in the context of dynamical systems theory and time series analysis. They build the theoretical basis of the construction of numerical models.
Consider a time-autonomous, discrete dynamical system with state $x$ and continuous map $f$ of the form \begin{equation}\label{eq:original system}
x_{k+1}=f(x_k),
\end{equation}
which yields the sequence of states $\{x_k\}$. Let $M$ be the attractor of the system, that is the set of points where all states tend to as $k\to\infty$, independent of the initial state.
Let the system (\ref{eq:original system}) have an observer function $y$ of the current state $x_k$, which for iterative application of the flow map $f$ yields a sequence of observations $\{y_k\}$ where $y_k:=y(x_k)=y(f^kx_0)$.

\subsection{Time-delay embedding and Takens' theorem}
The observation space consisting of all $y_k$ does not directly represent dynamic features of the underlying system. This is because the space might not be closed dynamically: a single observation might not contain enough information to predict the next one. Takens' theorem and the resulting method of delays provide the means to construct such a dynamically closed space, which is diffeomorphic to the attractor manifold $M$. Let 
\begin{equation}
\textbf{h}_k=(y_k,y_{k+1},\dots,y_{k+T-1})
\end{equation}
be a vector of $T$ observations, then for sufficiently large $T$ Takens' theorem states the existence of a function $G$ with \begin{equation}\label{eq:dynamic embedding}\textbf{h}_{k+1}=G(\textbf{h}_k).\end{equation}
Note that instead of past observations, we incorporate future observations in the current state, which will be important when constructing a numerical model later. 
Also see \cite{small-2004} for a short review on how to compute $T$. Takens proved the theorem with equal time sampling. See \cite{huke-2007} for an approach where the time sampling must not be equal.
In the original form of the theorem, $y_k$ must be scalar and must also be generated without observational noise and without stochastic effects in $f$. The multivariate case was recently described by Deyle and Sugihara \cite{deyle-2011}, and the stochastic version was described by Stark et al. \cite{stark-1997}.
The existence of the dynamic $G$ in Eq. \ref{eq:dynamic embedding} proofs that the dynamics on attractors of the system in Eq. \ref{eq:original system} can be reconstructed only from observations $\{y_k\}$ of the system.

\subsection{Dimension reduction and observer function}

The states $\textbf{h}_k$ are usually high-dimensional, since many $y_k$ must be considered to close the system ($T$ is large). Dimension reduction can be performed linearly by eigenvalue decomposition or non-linearly via kernel methods. Berry et al. showed that diffusion maps \cite{coifman-2006} are the natural choice for nonlinear dimension reduction of the time-lagged manifold, since they recover the restriction of the Lyapunov metric to the most stable Oseledet subspace  \cite{berry-2013}. Also, time-scale properties of the underlying system (Eq. \ref{eq:original system}) are separated in the reconstructed space, similar to a Fourier decomposition. The slowest dynamics can thus be separated from fast dynamics and noise, and a selection of temporal scales is possible when reconstructing the system.

Generally, the dimension reduction of the time-lagged manifold is performed via a coordinate transform from the space of the high-dimensional $\textbf{h}_k$ into a  lower-dimensional Euclidean space.
Let $\phi$ be the coordinate transform of $\textbf{h}_k$ into coordinates $\phi_k$, so that $\phi_k:=\phi(\textbf{h}_k)$. Eq. \ref{eq:dynamic embedding} can now be rewritten with a dynamic $\tilde{G}$ similar to $G$ but acting on the transformed values $\phi_k$:
\begin{equation}\label{eq:dynamic phi}
\phi_{k+1}=\tilde{G}(\phi_k).
\end{equation}
Eq. \ref{eq:dynamic phi} is again a time-autonomous, discrete dynamical system. An observer function $\tilde{y}$ can be constructed with the implicit definition 
\begin{equation}
\tilde{y}(\phi_k)=\tilde{y}(\phi(\textbf{h}_k))=\tilde{y}(\phi(y_k,y_{k+1},\dots,y_{k+T-1}))=y_k.
\end{equation}
The observer function $\tilde{y}$ is well defined if the coordinate transformation $\phi$ has an inverse $\phi^{-1}$, which ensures the existence of $\tilde{y}=(y\circ\phi^{-1})_1$. Since the time-lagged manifold is diffeomorphic to the attractor manifold \cite{berry-2013}, this is always possible theoretically. In the numerical computation of the coordinate transformation with diffusion maps, this inverse is preserved for the precomputed points and can hence be also reconstructed by interpolation.

\FloatBarrier
\section{Numerical model construction}\label{sec:numerical model construction}

Based on the framework of Takens' theorem and nonlinear dimension reduction it is possible to construct numerical, macroscopic models from output data of simulations or experiments.
A \textit{numerical} model consists of all the objects of a model (state space, dynamic, observer), and all objects are created numerically via interpolation or approximation.
The basic idea for the numerical models discussed here is to store the dynamics and the observer function as precomputed points and then interpolate them. In Theorem \ref{thm:storagereduction}, we will state conditions when the construction has much lower storage requirements regarding interpolation points compared to simply storing the observed values of the original and interpolating those. The following objects have to be precomputed:
\begin{itemize}
\item Coordinate transformation $\phi$ from given input $x_0$ to closed observables $\phi_0$
\item Dynamics of closed observables, in the form $G(\phi_n)=\phi_{n+1}$ or $\Delta G(\phi_n)=\phi_{n+1}-\phi_{n}$
\item Observer function $\tilde{y}$, from $\phi_n$ to the observation of the original observer function
\end{itemize}
With these three objects, the numerical model is independent of evaluations of the original model $f$ (see \S\ref{sec:mathematical framework}). 
We will now show how to construct and store the interpolants for the numerical models. We also perform an error analysis of the numerical modeling process.

\subsection{Storing the interpolants for the numerical model}
The interpolants used by the numerical model require the storage of many data points. However, in many cases, the numerical model needs much fewer points for the same accuracy compared to the storage of all output. This is formalized in Theorem \ref{thm:storagereduction} and is one of the main motivations to construct the new model instead of simply storing output and looking up the values when needed. Also, the theorem states a condition that hints towards new scientific insight: if the condition is true and the numerical model is more efficient than naive storage of observations, then the input variables have a dynamical connection, which might not be obvious and might lead to new understanding of the given process.

We formalize the storage requirements of an interpolant as follows.
Consider a function $f\in C^k([0,1]^p,\mathbb{R}^m)$ with $k\in\mathbb{N}_0$, $p,m\in\mathbb{N}$. Let $\epsilon>0$ and consider an interpolant $f_\epsilon$ of $f$, so that
\begin{equation}
\|f_\epsilon-f\|_\infty<\epsilon.
\end{equation}
If $f$ is a black box (also sometimes called oracle) and can hence only be queried at distinct points $x_i\in[0,1]^p$ to yield the values $f(x_i)$, the interpolant is constructed by sampling the space $[0,1]^p$ with a sampling method $S$. This method $S$ needs $S(\epsilon,p)$ sampling points in the construction of the interpolant $f_\epsilon$. For the  full-grid sampling method, $S(\epsilon,p)$ increases exponentially in $p$, since if $N$ different samples of one coordinate axis are considered, $N^p$ points must be stored for the $p$-dimensional space. If the function $f$ has regularity $k>0$, the error between $f_\epsilon$ and $f$ decreases with the order $\mathcal{O}(N^{-k/p})$. More advanced sampling techniques such as sparse grids \cite{bungartz-2004} exist, where the number of points for a comparable accuracy increases with $\mathcal{O}(N(\log N)^{p-1})$, mitigating the curse of dimensionality.
For all sampling techniques, decreasing $p$ without changing the smoothness properties of the function is a good way to reduce the number of points that need to be stored for $f_\epsilon$ in order to achieve an accuracy smaller than $\epsilon$.

In the setting of this paper, we try to approximate the output of a dynamical system (Eq. \ref{eq:original system}). This means we try to find an interpolant $g_\epsilon$ for the function
\begin{equation}
g:\mathbb{N}\times A_0\to\mathbb{R}^m,\ g(k,x_0)=y(f^kx_0)\approx g_\epsilon(k,x_0).
\end{equation}
If $A_0$ is $p$-dimensional, the function $g$ has $p+1$ parameters. Hence, an interpolant $g_\epsilon$ will need $S(\epsilon,p+1)$ points for an approximation accuracy of at least $\epsilon$. 
With closed observables, we only need $S(\epsilon,p)$ points for the same accuracy, which will be shown via Theorem \ref{thm:storagereduction}.

\begin{theorem}\textbf{Storage reduction}\label{thm:storagereduction}
Define $n_0:=\dim(A_0)$, and $d:=\dim(\phi(A))$. Given a sampling method $S$ and an accuracy $\epsilon>0$, the interpolants $\phi_I$, $G_I$ and $\tilde{y}_I$ need at most $\mathcal{O}(S(\epsilon,n_0))$ points if the following condition is true:
\begin{equation}
d < n_0+1.
\end{equation}
In contrast to that, storing the output of the original model needs $\mathcal{O}(S(\epsilon,n_0+1))$ points, where the additional dimension is approximating the time variable.
\end{theorem}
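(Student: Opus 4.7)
The plan is to decompose the storage cost of the numerical model into the three precomputed interpolants and bound each one using the dimension of its input domain, then compare the sum against the naive cost $\mathcal{O}(S(\epsilon,n_0+1))$ of interpolating the trajectory map $g(k,x_0)$.

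First I would identify the domain of each interpolant. The coordinate transformation $\phi_I$ has input $x_0 \in A_0$, a space of dimension $n_0$, so the sampling method produces $\mathcal{O}(S(\epsilon,n_0))$ points for it. The dynamics $G_I$ and the observer $\tilde{y}_I$ both act on the closed-observable space $\phi(A)$, which by definition has dimension $d$; hence each one costs at most $\mathcal{O}(S(\epsilon,d))$ points. Summing gives an overall storage of
\begin{equation}
\mathcal{O}\bigl(S(\epsilon,n_0) + 2\, S(\epsilon,d)\bigr).
\end{equation}

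Next I would invoke the assumption $d < n_0+1$, i.e. $d \le n_0$. Because any reasonable sampling rule $S$ is monotone non-decreasing in its dimension argument (adding coordinates cannot decrease the count needed to achieve accuracy $\epsilon$ on a generic smooth function), this gives $S(\epsilon,d) \le S(\epsilon,n_0)$. Thus the two terms involving $d$ are dominated by $S(\epsilon,n_0)$ and the total collapses to $\mathcal{O}(S(\epsilon,n_0))$, as claimed.

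For the contrast, I would note that interpolating the black-box map $g:\mathbb{N}\times A_0\to\mathbb{R}^m$, $g(k,x_0)=y(f^k x_0)$, requires sampling in $n_0+1$ coordinates, with the extra dimension covering the discrete-time index $k$ (treated as a continuous parameter when $\Delta t$ is small, or simply an additional axis to be resolved to accuracy $\epsilon$). Applying $S$ in this enlarged domain yields $\mathcal{O}(S(\epsilon,n_0+1))$ points, completing the comparison.

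The only real subtlety to articulate is that the three interpolants have genuinely different input domains ($A_0$ for $\phi_I$, $\phi(A)$ for $G_I$ and $\tilde{y}_I$), and that the bound $\mathcal{O}(S(\epsilon,n_0))$ only kicks in once $d\le n_0$; without that hypothesis, $G_I$ and $\tilde{y}_I$ could dominate. Monotonicity of $S$ in its second argument should be stated explicitly as a mild regularity assumption on the sampling scheme (satisfied by full grids, sparse grids, and tensor-product quasi-interpolants alike), since the entire reduction hinges on it.
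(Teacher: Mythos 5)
Your proof is correct and follows essentially the same route as the paper's: you decompose the storage into the three interpolants, bound each by the sampling cost $S(\epsilon,\cdot)$ in its domain dimension ($n_0$ for $\phi_I$, $d$ for $G_I$ and $\tilde{y}_I$), use $d\le n_0$ to collapse the total to $\mathcal{O}(S(\epsilon,n_0))$, and contrast this with the extra time dimension in naive storage. The only cosmetic difference is that the paper counts component-wise ``surfaces'' ($d$, $d$, and $m$ of them) with an ad hoc dimension arithmetic, whereas you make the monotonicity of $S$ in its dimension argument explicit --- a mild assumption the paper uses implicitly in asserting that the governing dimension is $\max(d,n_0)$.
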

\begin{remark}
The dimension reduction from $p+1$ to $p$ variables might not seem significant. However, the reduction is for the time variable, which often is discretized much finer than the initial conditions. The gain is very large for problems which depend smoothly on the initial conditions, but need long simulation runs in time. In this case, the coordinate transformation $\phi$ can be interpolated with a method with low storage requirements and the time variable can be disregarded due to our numerical model construction. There are many applications where this is possible, see example \ref{sec:real world example} where the two-dimensional initial conditions are discretized with $5$ and $11$ points per coordinate axis, but the time is discretized with $50$ points (one for each second). When using a full grid in example \ref{sec:examples2}, the number of necessary points is reduced by $10^5$ for a discretization error of $\mathcal{O}(10^{-5})$.
\end{remark}
Theorem \ref{thm:storagereduction} concerns storage efficiency compared to the storage of all model output. Remarkably, the proof is independent of any special properties of closed observables, and hence the theorem applies to all models constructed by storing and interpolating data.

\begin{proof}\textbf{Theorem \ref{thm:storagereduction}, Storage reduction}
Additionally, define $m:=\dim(y(A))$.
\begin{enumerate}
\item For the $n_0$-dimensional input, we need exactly $d$ surfaces with dimension $n_0$ each to store the mapping from input to the new variables.
\item We also need $d$ surfaces with dimension $d$ each to store the dynamic $G$ on the space $D$.
\item Finally, we need $m$ surfaces with dimension $d$ each to store the observer on the new space.
\item Since storing a finite number of surfaces does not add another dimension, the maximum dimensionality we need to store the new model is $\max(d,n_0)$.
\item To store the observed values over time, we need $m$ surfaces ($n_0$-dimensional) for each iteration step $n$, so in total we need $n_0+1$ dimensions.
\item The exact number of hyper-surfaces including dimension for the new model is given by $d\cdot \underline{d}+m\cdot\underline{d}+d\cdot \underline{n_0}$, where the underlined values stand for\textit{dimensions} and the multiplication between a number and a dimension is noted by $(\cdot)$. Note that the dimension is not distributive, that is $a\cdot(\underline{b}+\underline{c})\neq a\cdot \underline{b}+a\cdot \underline{c}$. The dimensions are additive, so $\underline{a}+\underline{b}=\underline{a+b}$. This allows for a reformulation into
\begin{equation}
\text{storage(new model)}=(d+m)\cdot \underline{d}+d\cdot \underline{n_0}
\end{equation}
\item The exact number of hyper-surfaces including dimension for the storage of all observations is given by $m\cdot\underline{n_0+1}$. Reformulation yields
\begin{equation}
\text{storage(full output)}= m\cdot\underline{n_0+1}
\end{equation}
\item The theorem follows from comparing and reducing the storage formulations in items 7 and 8.
\end{enumerate}
\qquad\end{proof}

The storage efficiency (Theorem \ref{thm:storagereduction}) is relevant since a lot of output needs to be stored for the numerical model to be accurate. The theorem states the conditions when it is highly advantageous to construct closed observables instead of simply storing the output. The difference can be as large as the difference between the memory of a smartphone and a supercomputer (five orders of magnitude, also see example \ref{sec:examples2} below).

\FloatBarrier
\subsection{Algorithmic construction of the interpolants}\label{sec:algorithms}

In this section, we describe the algorithmic construction of the coordinate transformation, dynamics and observer function of the numerical model. A brief error analysis follows, where we verify numerically the importance of the interpolation error stated by Theorem \ref{thm:closure of observer}.

We construct the new model with the following four steps. The detailed algorithm is shown in Figure \ref{lst:algorithm}.
\begin{enumerate}
\item Construct a set $M$ from observations of states over time.
\item Embed the observations of $M$ on a set $H$, using the current and also future observations to define coordinates of points on $H$.
\item Re-parametrize $H$ with diffusion maps to form the coordinates of the closed observables.
\item Define the coordinate transformation, dynamic and the observer of the new model on the closed observables. In this step, approximation and interpolation might be necessary.
\end{enumerate}
After the construction steps, the numerical model can be used for simulation and analysis, independent of the original system. A simulation with the numerical model can be performed in the following steps, given an initial state $x_0$:
\begin{enumerate}
\item Compute initial states $\phi_0=\phi(x_0)$ of the closed observables.
\item Iterate the given observables with the constructed dynamic, so that $\phi_{n+1}=\phi_n+\Delta G(\phi_n).$
\item Observe the states $\phi_n$ with the constructed observer function $\tilde{y}$ to generate observations of the original system:
\begin{equation}
\tilde{y}(\phi_n)\approx y(x_n)=y_n.
\end{equation}
\end{enumerate}
The simulation with the numerical model is independent of the original system (Eq. \ref{eq:original system}), because it does not need evaluations of the dynamic $f$ or the observer $y$. The independence of the two systems enables a precomputation of the numerical model on a high performance system or with a large array of experiments, which is then condensed into the numerical model that can be used on a much less powerful device such as a smartphone. 

\begin{figure}[ht]
\begin{framed}
\noindent The following steps need to be completed consecutively in order to construct the numerical model:
\begin{enumerate}
\item Construct an approximation of the attractor manifold $M$ (see \S\ref{sec:mathematical framework}):
\begin{itemize}
\item Decompose the initial space $A_0$ into sample points $x_{1,0},x_{2,0},\dots$
\item Run the simulator or experiment up to time $t_{end}$, starting at all sampled initial points $x_{i,0}$
\item $M$ is approximated by the observations of all states generated in the simulation or experiment.
\end{itemize} 
\item Construct the time-lagged manifold $H$ with points $\textbf{h}_k$ so that for all observations on $M$ created via observing a state $x_k$ with the observer function $y$, 
$$\textbf{h}_k=(y(x_k),y(fx_k),y(f^2x_k),\dots,y(f^{T-1}x_k))$$
\item Compute pair distances $d$ between all points $\textbf{h}_i,\textbf{h}_j\in H$
$$d(i,j)=\|\textbf{h}_i-\textbf{h}_j\|_H$$
\item Construct the diffusion map via the algorithm in \cite{singer-2009}:
\begin{itemize}
\item Transition matrix $W_{i,j}=\exp(-d(i,j)^2/\epsilon^2)$
\item Normalization matrix $N_{ii}=\sum_j W(i,j)$, so that $N$ is a diagonal matrix containing the row sums of $W$
\item Solving the eigensystems $N^{-1/2}WN^{-1/2}\textbf{v}_k=\lambda_k\textbf{v}_k$ for all $k$, then sort $\lambda_k$ by absolute value
\item The diffusion map coordinates are $u_k=\lambda_kN^{-1/2}\textbf{v}_k$, so that the diffusion map is
$$\Psi:\textbf{h}_i\mapsto (u_2,u_3,\dots).$$
The first eigenvector $u_1$ only contains $1$ and is omitted. We also truncate the map for small $\lambda_k$ and remove all $u_k$ which only depend on previous eigenvectors. The resulting coordinates $\phi_n=\Psi(\textbf{h}_n)$ are the closed observables.
\end{itemize}
\item The objects for the numerical model can be generated via interpolation, we use $I[\cdot]$ as a generic interpolation of the given arguments.
\begin{itemize}
\item The coordinate transform $\phi$ for the inputs $x_{i,0}$ is an interpolant of the form
$$\phi(x_{i,0})=I[\phi_{i,0}],$$
where $\phi_{i,0}=\Psi(\textbf{h}_{i,0})=\Psi(y(x_{i,0}),\dots,y(f^{T-1}x_{i,0}))$.
\item The dynamic $\Delta G$ is an interpolation of the difference between subsequent values of $\phi_n$ in time:
$$\Delta G(\phi_n)=I[\phi_{n+1}-\phi_n],$$
where the existence of $\Delta G$ is ensured by Takens' theorem (see \S\ref{sec:mathematical framework}). Note that an interpolation of the difference might take other values such as $\phi_{n-1}$ into account to be more accurate.
\item The observer function $\tilde{y}$ of the closed observables is an interpolant of the observations in the original system:
$$\tilde{y}(\phi_n)=\tilde{y}(\phi(\textbf{h}_n))=\tilde{y}(\phi(y_n,y_{n+1},\dots,y_{n+T-1}))=I[y_n]$$
\end{itemize}
\end{enumerate}
\end{framed}
\caption{\label{lst:algorithm}Algorithm to construct the objects of the numerical model  (input mapping, dynamic and observer) from given output data.}
\end{figure}

We denote interpolated versions of the objects with a capital $I$ as a subscript, and the error between the interpolant and the actual function by $E_{\phi,G,\tilde{y}}$, respectively, so that
\begin{itemize}
\item the coordinate transformation $\phi$ is interpolated by $\phi_I$ with maximum error $E_\phi:=\max_{a\in A}(\phi_I(a)-\phi(a))\in D$,
\item the dynamics $G$ is interpolated by $G_I$ with maximum error $E_G:=\max_{\phi\in D}(G_I(\phi)-G(\phi))\in D$, and
\item the observer $\tilde{y}$ is interpolated by $\tilde{y}_I$ with maximum error $E_{\tilde{y}}:=\max_{\phi\in D}(\tilde{y}_I(\phi)-\tilde{y}(\phi))\in \mathbb{R}^m$.
\end{itemize}
All interpolants are constructed from discrete observations and diffusion map coordinates. The numerical model with these interpolants approximates the observed values of the original model. The approximation error only depends on the interpolation errors $E_\phi$, $E_G$ and $E_{\tilde{y}}$, as stated by Theorem \ref{thm:closure of observer} and proofed below.

\subsection{Error analysis}
We now analyze the different types of errors that occur during construction and simulation of the numerical model. Theorem \ref{thm:closure of observer} ensures that the approximation of the original model by the numerical model only depends on the interpolation error. When generating more output with the original model, the numerical model will usually be more accurate.
Note that the conditions stated in Theorem \ref{thm:closure of observer} are rarely all fulfilled in real applications. However, the construction process seems to be quite robust, which is mostly due to the robustness of diffusion maps regarding noise (see example \ref{sec:real world example} and the discussion in \cite{coifman-2006,berry-2013}).

\begin{theorem}\label{thm:closure of observer}\textbf{Correct output of the new model}
Consider the state space $A$ of the model in Eq. \ref{eq:original system} consisting of all possible states $x$, and the initial states $A_0\subseteq A$. Consider the dynamic $f:A\to A$ and the observer $y:A\to\mathbb{R}^m$. Let the objects $\phi\in C^2(A,D)$ (coordinate transformation into closed observable space $D$), $G\in C^2(D,D)$ (dynamic) and $\tilde{y}\in C^2(D,\mathbb{R}^m)$ (observer) be constructed numerically by the interpolation functions $\phi_I\in C^2(A,D)$, $G_I\in C^2(D,D)$ and $\tilde{y}_I\in C^2(D,\mathbb{R}^m)$. Let the interpolants have at most errors $E_I$, $E_G$ and $E_{\tilde{y}}$ on the respective domains. The errors are small, so that their norms are much smaller than one. The dynamic $G$ must have bounded, small derivatives so that for a constant $M\in(0,1)$, $\sup_\phi\|DG(\phi)\|\leq M$. Then, for any given initial state $a_0\in A_0$ and any fixed number of iterations $n\in\mathbb{N}$,
\begin{equation}\label{eq:thm closure of observer}
\|y\left(f^na_0\right)-\tilde{y}_I\left(G_I^n\phi_I(a_0)\right)\|\leq C_1\left(\frac{1-M^{n+1}}{1-M}\right)\|E_G\|+C_2 M^{n}\|E_\phi\|+C_3\|E_{\tilde{y}}\|.
\end{equation}
The constants $C_1,C_2$ and $C_3$ are positive and independent of $n$ and $a_0$.
\end{theorem}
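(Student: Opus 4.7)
The plan is to start from the exact closure identity $y(f^n a_0) = \tilde{y}(G^n \phi(a_0))$, which holds by the very construction of $\phi$, $G$ and $\tilde{y}$ in Section \ref{sec:mathematical framework}, and then control the deviation introduced by replacing each of the three exact objects by its interpolant. Setting $\phi_n := G^n \phi(a_0)$ and $\tilde{\phi}_n := G_I^n \phi_I(a_0)$, I would first split
\begin{equation*}
\|\tilde{y}(\phi_n) - \tilde{y}_I(\tilde{\phi}_n)\| \leq \|\tilde{y}(\phi_n) - \tilde{y}(\tilde{\phi}_n)\| + \|\tilde{y}(\tilde{\phi}_n) - \tilde{y}_I(\tilde{\phi}_n)\|,
\end{equation*}
so that the second term contributes at most $\|E_{\tilde{y}}\|$ directly, while the first is bounded by $\|D\tilde{y}\|_\infty \cdot \|\phi_n - \tilde{\phi}_n\|$ using $\tilde{y}\in C^2$ on the compact image of $\phi$.

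The crux of the argument is a recursive bound on $e_n := \|\phi_n - \tilde{\phi}_n\|$. Inserting and subtracting $G(\tilde{\phi}_n)$ gives
\begin{equation*}
e_{n+1} \leq \|G(\phi_n) - G(\tilde{\phi}_n)\| + \|G(\tilde{\phi}_n) - G_I(\tilde{\phi}_n)\| \leq M\, e_n + \|E_G\|,
\end{equation*}
where the first contraction uses the hypothesis $\sup_\phi \|DG(\phi)\| \leq M < 1$ through the mean value inequality, and the base case is $e_0 \leq \|E_\phi\|$ by definition. Solving this linear recurrence yields
\begin{equation*}
e_n \leq M^n \|E_\phi\| + \frac{1-M^n}{1-M}\|E_G\|,
\end{equation*}
and feeding this into the previous split produces exactly the three-term bound of Eq. \ref{eq:thm closure of observer}, where the geometric factor $\tfrac{1-M^{n+1}}{1-M}$ emerges (up to an obvious reindexing that absorbs the contribution of $\tilde{y}$'s application) and the constants $C_1,C_2,C_3$ are collected uniform bounds on $\|D\tilde{y}\|$ (with $C_3$ also including the trivial bound $1$ from the direct interpolation term).

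The main obstacle I expect is verifying that the linearization underlying both inequalities remains valid along the whole perturbed trajectory: a priori $\tilde{\phi}_n$ need not stay in the subset of $D$ on which $\|DG\|\leq M$ and $\|D\tilde{y}\|$ is uniformly bounded. This is where the hypothesis that $\|E_\phi\|$, $\|E_G\|$ and $\|E_{\tilde{y}}\|$ are \emph{much smaller than one} pays off: since $M<1$, an easy induction shows that $e_n$ stays below $\tfrac{\|E_G\|}{1-M}+\|E_\phi\|$ uniformly in $n$, so $\tilde{\phi}_n$ remains in a fixed small neighbourhood of the true orbit where the $C^2$-regularity delivers uniform derivative bounds. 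The $C^2$ assumption (rather than just $C^1$) is used to control the quadratic Taylor remainders, allowing them to be absorbed into the constants $C_1,C_2,C_3$ without affecting the stated linear dependence on $E_\phi$, $E_G$, and $E_{\tilde{y}}$.
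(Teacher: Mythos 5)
Your proof is correct, and it follows the paper's overall skeleton --- the exact closure identity $y(f^n a_0)=\tilde{y}(G^n\phi(a_0))$ from Takens' theorem, followed by control of the deviation introduced by replacing the three exact objects with their interpolants --- but the core error-propagation step is argued by a genuinely different technique. The paper's Lemma \ref{lemma:approximation} repeatedly Taylor-expands $G_I=G+E_G$ along the composition, pushing first-order terms $DG\cdot E_G$ through the iterates and collecting $\mathcal{O}(\|E_G\|^2)$ remainders; this is where its $C^2$ hypothesis and its ``errors so small that products of errors vanish'' assumption are consumed, and it forces somewhat loose bookkeeping such as norms of expressions containing $\mathcal{O}$-terms. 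You instead bound $e_n=\|G^n\phi(a_0)-G_I^n\phi_I(a_0)\|$ via the exact recursion $e_{n+1}\le M e_n+\|E_G\|$ (triangle inequality plus mean value inequality) with base case $e_0\le\|E_\phi\|$, and then solve the linear recurrence. This buys several things: the recursion needs only $C^1$ regularity with $\sup\|DG\|\le M$, involves no quadratic remainders at all, produces genuine inequalities throughout, and yields the marginally sharper factor $(1-M^n)/(1-M)\le(1-M^{n+1})/(1-M)$, with the constants coming out explicitly as $C_1=C_2=\|D\tilde{y}\|_\infty$ and $C_3=1$. You also make explicit a point the paper leaves implicit: the perturbed trajectory $\tilde\phi_n$ must remain in a region where the derivative bounds are valid, which your uniform-in-$n$ bound on $e_n$ (using $M<1$ and the smallness of the errors) settles; in your formulation that localization is the only place the smallness assumption is actually needed.
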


\begin{remark}
The norms of the errors of the interpolants $E_\phi$, $E_G$ and $E_{\tilde{y}}$ are placeholders for the actual error estimates, which differ greatly for different interpolation methods. As an example, consider a full-grid discretization with cell size $h$ for all interpolants and orders $p_{\phi,G,{\tilde{y}}}$, respectively. If $M\ll 1$ and $p_\phi< p_G,p_{\tilde{y}}$, then Eq. \ref{eq:thm closure of observer} reduces to $\|y\left(f^na_0\right)-\tilde{y}_I\left(G_I^n\phi_I(a_0)\right)\|\leq C_4h^{p_\phi}$ for a positive constant $C_4$. Hence the approximation error only depends on the discretization of the coordinate transform $\phi$.
\end{remark}



\begin{proof}\textbf{Theorem \ref{thm:closure of observer}: correct output of the new model}
The proof consists of the following chain of arguments, see Appendix \ref{sec:appendix lemmas} for details.
\begin{enumerate}
\item Given the exact versions of the numerical objects $\phi$, $G$ and $\tilde{y}$, the output is reproduced exactly. This can be proved with Takens' theorem and the assumption of a well-behaved observer $y$ and dynamic $f$.
\item The interpolants of the numerical objects have small errors, so that multiplications of the errors vanish.
\item Taylor-decomposition of the interpolants (see Lemma \ref{lemma:approximation} in Appendix \ref{sec:appendix lemmas}) yields the statement of the Theorem when combined with the first and second step of the proof.
\end{enumerate}
\qquad\end{proof}

The original model (or live experiment) is assumed to produce negligible errors when computing observations. Also, the matrix and eigensystem computations necessary for the diffusion map can be performed very accurately and thus their errors are  neglected, too.
Closed observables are based on a truncated diffusion map (see Figure \ref{lst:algorithm}), where the truncation is such that all information necessary to close the system is kept. Hence, we can also ignore the truncation error in this case.
Three main sources of error remain:
\begin{enumerate}
\item Mapping from a given input to the initial state of the closed observables
\item Iterating the dynamic
\item Observing the current state
\end{enumerate}
Theorem \ref{thm:closure of observer} states that the dynamic contributes to the total error via $E_G$, therefore we performed a numerical experiment (see \S\ref{sec:examples2} and Figure \ref{fig:error_time}) to verify that changing the error $E_G$ also affects the total error. The other sources of error will be treated in a future paper.

\FloatBarrier
\section{Illustrating examples\label{sec:examples}}

We now show how numerical models can be constructed from observations of a system of ordinary differential equations, and from a transport-diffusion process. We chose the two examples to illustrate the ideas behind numerical model construction, so that the resulting numerical model can be explained from the known properties of the example. To show the potential of the approach, a more complex example of a train station is described in \S\ref{sec:real world example}.
The examples are treated similarly in the following text. First, the example is explained shortly, including the problem that arises. Second, closed observables are constructed using the algorithms described in \S\ref{sec:algorithms}. Third, the solution in form of a numerical model is explained and evaluated regarding efficiency and further use.

\subsection{Two-dimensional initial states, one-dimensional observer}\label{sec:examples2}

A dynamic process is often described infinitesimally via a differential equation, where the derivative of a given quantity with respect to time is set in relation to the value of the quantity itself. 
In this first example, we use an ordinary differential equation to describe a circling motion towards the origin (see Figure \ref{fig:phase_system2}). The state space is two-dimensional, and the observer measures the distance of a given state to the origin. This enables the visualization of both the state space of the system and the observed values, which is not as easy if the state is very high-dimensional, such as in many-particle systems. Also, the different dimensions of the state space, the observed space and the resulting dimensions of the numerical model can be visualized easily in this low dimension.
In the example presented here, closed observables provide a significant storage reduction in the sense of Theorem \ref{thm:storagereduction}, which we demonstrate in terms of the memory capacity of a supercomputer and a smartphone.

To define the given system precisely, let $A=[-1,1]^2$ be the state space, equal to the initial state space $A_0=A$, $y(a)=\|a\|$ be the observer and $F$ the push-forward by $\Delta t=0.1$ of the system of ordinary differential equations
\begin{equation}\label{eq:system2}
\frac{da}{dt}=-a+Ra,\ a\in A,
\end{equation}
where $R$ is the rotation matrix $[0, -1;1, 0]$. The phase diagram in this case is depicted in Figure \ref{fig:phase_system2}. 
\begin{figure}[ht]
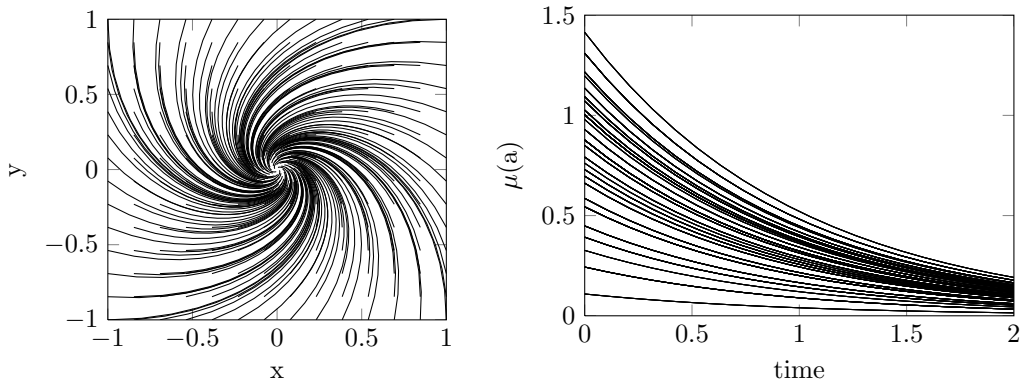

\centering
\setlength\fheight{4cm}
\setlength\fwidth{6cm}
\input{phases2.tex}
\setlength\fheight{4cm}
\setlength\fwidth{6cm}
\input{mus2.tex}
\caption{\label{fig:phase_system2}System with two-dimensional initial states. Starting from any point in $\mathcal{A}_0=[-1,1]^2$, the system spirals inward (left figure). When measured, every trajectory shows the same characteristic (right figure): the distance to the center is dropping to zero exponentially fast.}
\end{figure}
To generate observations of the system, we initialize it at equally spaced points on $A_0$ and store all $y(a_n)=\|a_n\|$ of the discrete system $a_{n+1}=F(a_n)$
up to time $t_{end}=2$.
The closed observables are constructed with the algorithm described in Figure \ref{lst:algorithm} with $T=5$, which corresponds to $\Delta t=0.5$.

Fig. \ref{fig:coordinates_system2} shows the relation between the first and following diffusion map coordinates. All but the first can be discarded, as they are functions of the first, which means that only one dimension is necessary to describe the dynamics of the system.
The state in the new coordinates also moves to a steady state exponentially fast (Fig. \ref{fig:dynamic_system2}). Figure \ref{fig:reduced_initialstates2} shows how the initial states of the original system translate to the initial states of the reduced system. In contrast to the distance to the center before (Figure \ref{fig:phase_system2}, right), they can not directly be interpreted but must be observed first. The observer function is depicted in Figure \ref{fig:reduced_initialstates2} on the right.

\begin{figure}[ht]
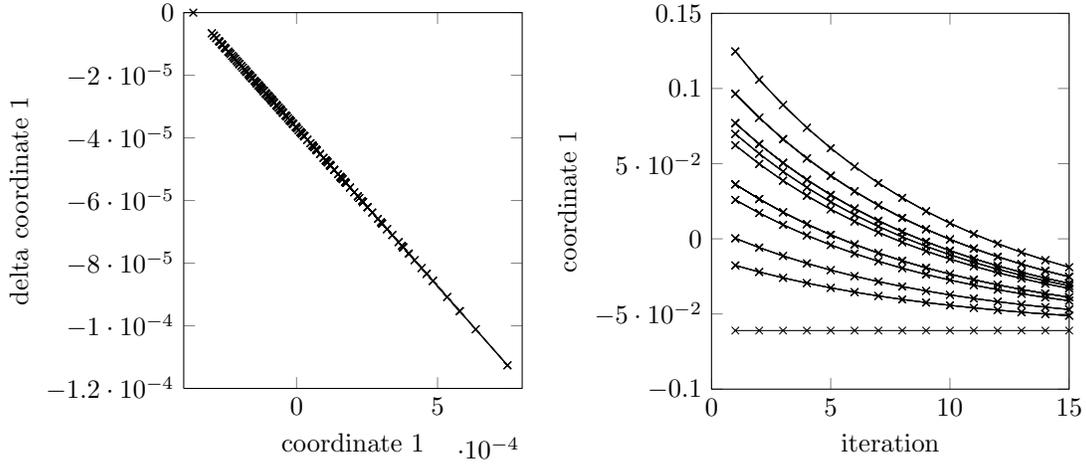

\centering
\setlength\fheight{5cm}
\setlength\fwidth{6cm}
\input{dy1ds2.tex}
\setlength\fheight{5cm}
\setlength\fwidth{5cm}
\input{trs2.tex}
\caption{\label{fig:dynamic_system2}Left figure: dynamic on the closed observable.
Right figure: trajectories in the reduced system for several initial states. The exponential decay is clearly visible, though the actual values of the coordinate have no direct meaning (e.g. are not directly the distance to the center).}
\end{figure}
\begin{figure}[ht]
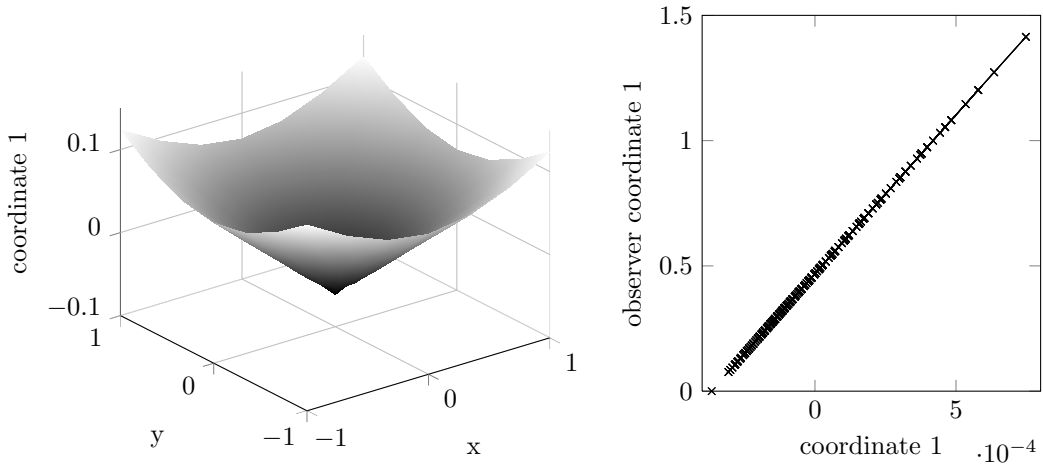

\centering
\setlength\fheight{5cm}
\setlength\fwidth{6cm}
\input{inis2.tex}
\setlength\fheight{5cm}
\setlength\fwidth{6cm}
\input{mucos2.tex}
\caption{\label{fig:reduced_initialstates2}Left: Initial states in reduced system coordinates relative to the original coordinates $(x,y)$. The reduced coordinate values are symmetrical around $(0,0)$, which is due to the choice of the observer in the original system (distance to the center). Right: observer of the closed observable $\phi_1$.}
\end{figure}

The reduction in this example is significant. Naively storing all observations over time for all initial states would take a three-dimensional hyper-surface (see Theorem \ref{thm:storagereduction} and Figure \ref{fig:storage reduction}). The reduced model with closed observables only needs two 1D-lines for dynamic and observer (Figures \ref{fig:dynamic_system2}, left and \ref{fig:reduced_initialstates2}, right), and one 2D-surface for the translation from initial state to the new coordinate (see Figure \ref{fig:reduced_initialstates2}). Even in this low-dimensional case, the naive storage of all observations over time for all initial states would take up all memory of a supercomputer (Tianhe-2, state 2015). If the same naive sampling is applied together with closed observables, the data fits in the memory of a smartphone (Figure \ref{fig:storage reduction}).
\begin{figure}[ht]
\centering
\setlength\fheight{6cm}
\setlength\fwidth{12cm}
%
%
\begin{tikzpicture}

\begin{axis}[%
width=0.571\fwidth,
height=\fheight,
at={(0\fwidth,0\fheight)},
scale only axis,
xmode=log,
xmin=822.645664814553,
xmax=115038677.764875,
xminorticks=true,
xlabel={\#points per dimension},
ymode=log,
ymin=203595819.13634,
ymax=6.6083907755405e+16,
yminorticks=true,
ylabel={total \#points stored},
axis background/.style={fill=white},
legend style={at={(0.5,1.03)},anchor=south,legend cell align=left,align=left,draw=white!15!black}
]
\addplot [color=black,dotted,line width=3.0pt]
  table[row sep=crcr]{%
1000	1000000000\\
1584.89319246111	3981071705.53498\\
2511.88643150958	15848931924.6111\\
3981.07170553497	63095734448.0194\\
6309.57344480193	251188643150.958\\
10000	1000000000000\\
15848.9319246111	3981071705534.98\\
25118.8643150958	15848931924611.2\\
39810.7170553497	63095734448019.2\\
63095.7344480193	251188643150958\\
100000	1e+15\\
158489.319246111	3.98107170553498e+15\\
251188.643150958	1.58489319246112e+16\\
398107.170553497	6.30957344480192e+16\\
409987.265483585	7.2692298510586e+16\\
};
\addlegendentry{naive};

\addplot [color=black,solid,line width=1.0pt,mark=x,mark options={solid}]
  table[row sep=crcr]{%
10000	100020000\\
15848.9319246111	251220341.014807\\
25118.8643150958	631007582.208825\\
39810.7170553497	1584972813.89522\\
63095.7344480193	3981197897.00387\\
100000	10000200000\\
158489.319246111	25119181293.7343\\
251188.643150958	63096236825.3057\\
398107.170553497	158490115460.452\\
630957.344480193	398108432468.186\\
1000000	1000002000000\\
1584893.19246111	2511889601295.97\\
2511886.43150958	6309578468574.81\\
3981071.70553497	15848939886754.5\\
6309573.44480193	39810729674496.6\\
10000000	100000020000000\\
15848931.9246111	251188674848822\\
25118864.3150958	630957394717923\\
39810717.0553497	1.58489327208255e+15\\
63095734.4480193	3.98107183172644e+15\\
100000000	1.00000002e+16\\
};
\addlegendentry{closed observables};

\addplot [color=gray,solid,line width=2.0pt,mark=o,mark options={solid}]
  table[row sep=crcr]{%
1000	8000000000\\
100000000	8000000000\\
};
\addlegendentry{maximum for a smartphone};

\addplot [color=gray,solid,line width=2.0pt,mark=x,mark options={solid}]
  table[row sep=crcr]{%
1000	1.408e+15\\
100000000	1.408e+15\\
};
\addlegendentry{maximum for a supercomputer};

\addplot [color=gray,dotted,line width=2.0pt]
  table[row sep=crcr]{%
100000	8000000000\\
100000	1.408e+15\\
};
\addlegendentry{distance between smartphone and supercomputer};

\end{axis}
\end{tikzpicture}%
\caption{\label{fig:storage reduction}Storage of a model with closed observables compared to naive storage for example 2 in \S\ref{sec:examples2}, with parameters $D=1$, $M=1$, $N_0=2$. In both cases we assume equal spacing of points. We assume a memory capacity of $8\text{GB}$ for the smartphone and $88\text{GB}\cdot 16000$ for the supercomputer (Tianhe-2, 2015).}
\end{figure}
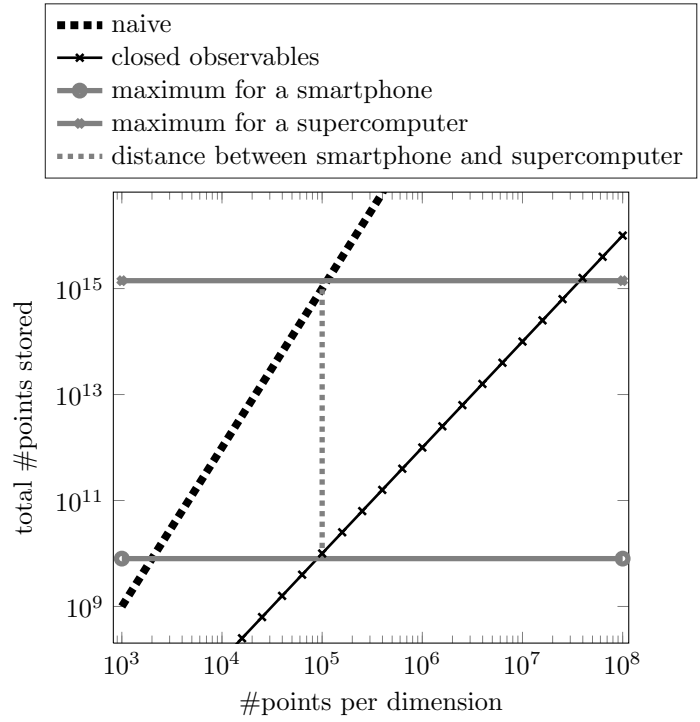

The success in this case can be explained by reformulating the system in Eq. \ref{eq:system2} with the observer $y(a)=\|a\|$. This yields the new system
\begin{equation}
\frac{db}{dt}=-b,
\end{equation}
where $b\in\mathbb{R}$ and $y(b)=b$. The translation from a value $a$ to $b$ is given by $b=\|a\|=y(a)$. Remarkably, the closed observables capture a similar system numerically.

We compared the numerical errors of experiment \ref{sec:examples2} by changing the number of points on the trajectories used to construct the dynamic $\Delta G$ of the numerical model. We used two numerical methods to approximate the difference $\phi_{n+1}-\phi_n$: a one-sided gradient, so that $\Delta G(\phi_n)=\phi_{n+1}-\phi_n$  and a gradient computed with central differences, so that $\Delta G(\phi_n)=(\phi_{n+1}-\phi_{n-1})/2$.
The results are shown in Figure \ref{fig:error_time}, where the relative error between the observations of the original and the numerical model is displayed for different numbers of numerical steps between $t=0$ and $t=2$. The change of the interpolation error as well as the change of the interpolation order directly influences the error of the numerical model, as stated by Theorem \ref{thm:closure of observer}. In our case, the one-sided gradient has order $\mathcal{O}(\Delta t)$, and the two-sided gradient has order $\mathcal{O}(\Delta t^2)$.

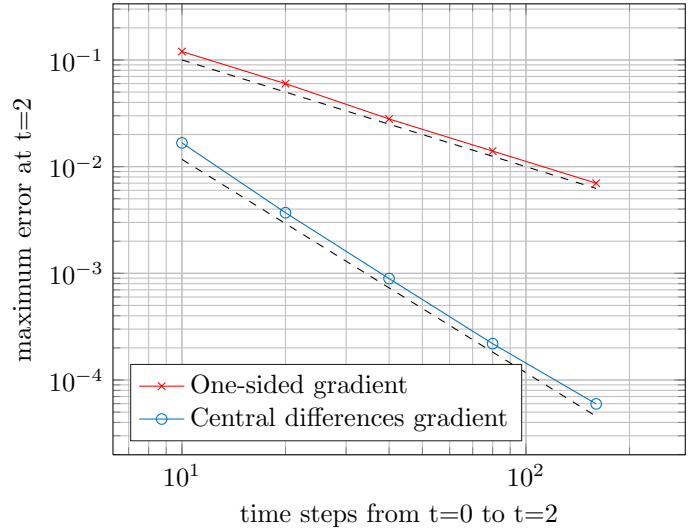
\begin{figure}[ht]
\centering
\setlength\fheight{6cm}
\setlength\fwidth{8cm}
%
%
\definecolor{mycolor1}{rgb}{0.00000,0.44700,0.74100}%
\begin{tikzpicture}

\begin{axis}[%
width=0.951\fwidth,
height=\fheight,
at={(0\fwidth,0\fheight)},
scale only axis,
xmode=log,
xmin=6.30288191278177,
xmax=290.491641417783,
xminorticks=true,
xlabel={time steps from t=0 to t=2},
xmajorgrids,
xminorgrids,
ymode=log,
ymin=1.98922893374588e-05,
ymax=0.335807209098705,
yminorticks=true,
ylabel={maximum error at t=2},
ymajorgrids,
yminorgrids,
axis background/.style={fill=white},
legend style={at={(0.03,0.03)},anchor=south west,legend cell align=left,align=left,draw=white!15!black}
]
\addplot [color=red,solid,mark=x,mark options={solid}]
  table[row sep=crcr]{%
10	0.12\\
20	0.06\\
40	0.028\\
80	0.014\\
160	0.007\\
};
\addlegendentry{One-sided gradient};

\addplot [color=mycolor1,solid,mark=o,mark options={solid}]
  table[row sep=crcr]{%
10	0.0167\\
20	0.0037\\
40	0.00089299\\
80	0.00021896\\
160	5.9707e-05\\
};
\addlegendentry{Central differences gradient};

\addplot [color=black,dashed,forget plot]
  table[row sep=crcr]{%
10	0.1\\
160	0.00625\\
};
\addplot [color=black,dashed,forget plot]
  table[row sep=crcr]{%
10	0.0117\\
160	4.5703125e-05\\
};
\end{axis}
\end{tikzpicture}%
\caption{\label{fig:error_time}Maximum of relative approximation error $e=\max_t\|\frac{y( f^t(a_0))-\tilde{y}(G^t(\phi (a_0)))}{y( f^t(a_0))}\|$ from $t=0$ to $t=2$ between the original system in example \ref{sec:examples2} and the model constructed with closed observables. The maximum is also computed over $1000$ randomly chosen initial values in $A_0=\{(0,y)|1\leq y\leq 3\}$. Different numerical approximations were used for $\Delta G$: the error decreases with order $\mathcal{O}(\Delta t)$ with a one-sided gradient and  $\mathcal{O}(\Delta t^2)$ for a gradient computed with central differences (see dashed lines).}
\end{figure}

\FloatBarrier
\subsection{Two-dimensional initial states, infinite-dimensional observer}\label{sec:examplesPDE}

Diffusion, transport and advection of physical quantities such as mass, concentration or energy are very common phenomena in nature. In this example, we use closed observables to numerically capture the dynamics of such processes. We provide observations of a generic quantity $u$ over a one-dimensional domain, and change two parameters of the system to generate different results. Thus, the system as seen by the algorithms is a black box $F$ with two parameters $c$ and $d$, producing observations $u(x,t)$ for a given time $t$ and space $x$ (see Figure \ref{fig:diffusion x} and \ref{fig:diffusion x all} in the appendix):
\begin{equation}
F_{c,d}(x,t)=u(x,t).
\end{equation}

To be able to verify the results of the closed observables, we choose the classical transport-diffusion PDE in one dimension on $x\in[-0.5,0.5]$ with periodic boundary conditions and constants for diffusion $c$ and advection $d$. In our example, the transport coefficient $a$ depends linearly on $d$ via $a(d)=10d$:
\begin{eqnarray}\label{eq:example PDE}
c u_{xx} &=& d u_t + a(d) u_x\\
u(x,0) &=&\exp(-25x^2).
\end{eqnarray}
This system is chosen because for $d\neq 0$ it can be reformulated as depending only on one coefficient:
\begin{equation}\label{eq:example PDE reformulated}
\frac{c}{d} u_{xx}= u_t + 10 u_x.
\end{equation}
The reformulation shows that the underlying system is two-dimensional, because of essentially one parameter $\frac{c}{d}$ and the time dimension. Via closed observables, we are able to capture this property of the system automatically: no reformulation is necessary, we simply provide the observed values of $u(x,t)$ from the simulator. The state at time $t$ is the function $u(\cdot,t)$, and the observer $y$ is the identity, so that $y(u)=u$.

Even though the observed values are functions, numerical model construction creates a finite-dimensional dynamical system and the observer of the closed observables yields a function.
The constructed numerical model is capable of producing different dynamics for given constants $c$ and $d$, where $a(d)=10d$.
The numerical model needs two dimensions to capture the two dimensional initial conditions for diffusion and advection (see Figure \ref{fig:dynamics system PDE}), and three dimensions for the observer (for every pair $(\phi_1,\phi_2)$, $u(x)$ has to be stored for all $x$, see Figure \ref{fig:observer system PDE}). This is one dimension less then naively storing all output for all combinations of the input parameters, since for each pair $(c,d)$ one would need to store $u(x,t)$ for all $x,t$, requiring storage of four dimensions.

Figure \ref{fig:error system PDE} shows a comparison of the error between the result of closed observables at intermediate values of $c$ and $d$ not used in the generation of the model and interpolation of the observations closest to the given values of $c$ and $d$.

\begin{figure}[ht]
    \centering
\setlength\fheight{5cm}
\setlength\fwidth{7cm}
%
%
\begin{tikzpicture}

\begin{axis}[%
width=0.75\fwidth,
height=\fheight,
at={(0\fwidth,0\fheight)},
scale only axis,
point meta min=0.00193045413622771,
point meta max=0.982835989757635,
axis on top,
xmin=-0.00122448979591837,
xmax=0.0477551020408163,
xlabel={time},
ymin=-0.526315789473684,
ymax=0.526315789473684,
ylabel={space},
axis background/.style={fill=white},
colormap={mymap}{[1pt] rgb(0pt)=(1,1,1); rgb(99pt)=(0,0,0)},
colorbar
]
\addplot [forget plot] graphics [xmin=-0.00122448979591837,xmax=0.0477551020408163,ymin=-0.526315789473684,ymax=0.526315789473684] {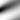};
\end{axis}
\end{tikzpicture}%
    \caption{\label{fig:diffusion x}Solution $u(t,x)$ of Eq. \ref{eq:example PDE} for diffusion $c=3$, advection $d=6$, and transport $a=10\cdot d=60$.}
\end{figure}

\begin{figure}[ht]
\centering
\setlength\fheight{9cm}
\setlength\fwidth{15cm}
\input{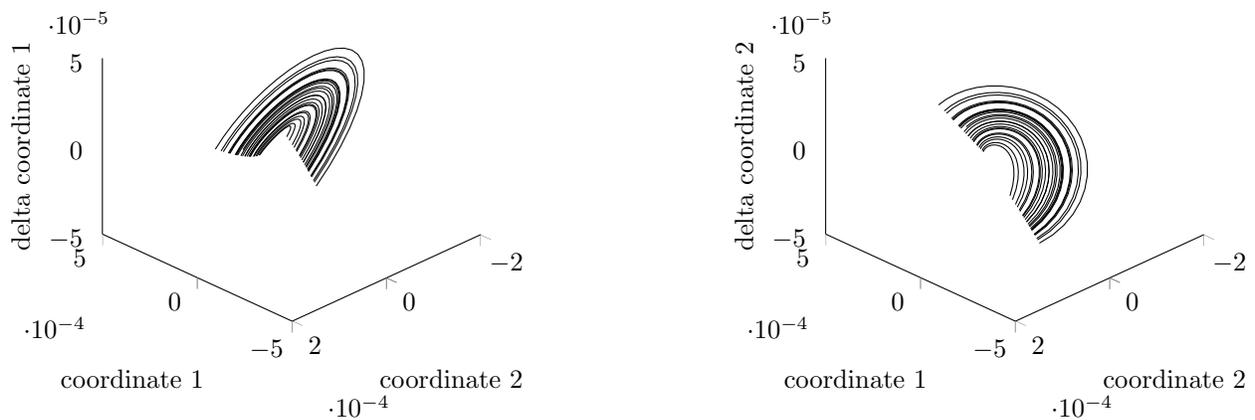}
\caption{\label{fig:dynamics system PDE}Dynamics of the closed observables, $\phi_1$ and $\phi_2$.}
\end{figure}
\begin{figure}[ht]
\centering
\setlength\fheight{7cm}
\setlength\fwidth{9cm}
\input{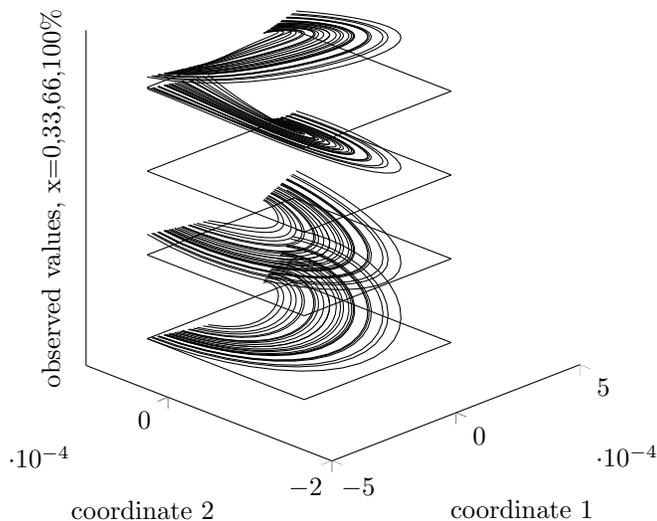}
\caption{\label{fig:observer system PDE}Slices at four points $x_1,x_2,x_3,x_4$ in $[-0.5,0.5]$ of the observer of the closed observables, $\tilde{y}(\phi_1,\phi_2)(x)$. The z-value of the upper slices is translated upward, otherwise they would all be overlapping.}
\end{figure}
\begin{figure}[ht]
\centering
\setlength\fheight{5cm}
\setlength\fwidth{6cm}
%
%
\begin{tikzpicture}

\begin{axis}[%
width=0.75\fwidth,
height=\fheight,
at={(0\fwidth,0\fheight)},
scale only axis,
point meta min=0.0173118201634044,
point meta max=0.997964136221985,
axis on top,
xmin=-0.526315789473684,
xmax=20.5263157894737,
xlabel={time step},
ymin=-0.526315789473684,
ymax=0.526315789473684,
ylabel={space},
axis background/.style={fill=white},
colormap/blackwhite,
colorbar
]
\addplot [forget plot] graphics [xmin=-0.526315789473684,xmax=20.5263157894737,ymin=-0.526315789473684,ymax=0.526315789473684] {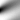};
\end{axis}
\end{tikzpicture}%
\setlength\fheight{5cm}
\setlength\fwidth{6cm}
%
%
\begin{tikzpicture}

\begin{axis}[%
width=0.75\fwidth,
height=\fheight,
at={(0\fwidth,0\fheight)},
scale only axis,
point meta min=2.15710964823721e-07,
point meta max=0.0114075025526915,
axis on top,
xmin=-0.555555555555556,
xmax=20.5555555555556,
xlabel={time step},
ymin=-0.526315789473684,
ymax=0.526315789473684,
ylabel={space},
axis background/.style={fill=white},
colormap={mymap}{[1pt] rgb(0pt)=(1,1,1); rgb(99pt)=(0,0,0)},
colorbar
]
\addplot [forget plot] graphics [xmin=-0.555555555555556,xmax=20.5555555555556,ymin=-0.526315789473684,ymax=0.526315789473684] {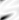};
\end{axis}
\end{tikzpicture}%
\caption{\label{fig:error system PDE}Left: predicted dynamics when interpolating at $c=1.6$, $d=2$. Right: relative error $\epsilon=\frac{|O-P|}{|O|}$ between observations $O$ (generated with $c=1.6,d=2$ and not used in the construction of the model) and prediction $P$.}
\end{figure}

\FloatBarrier
\section{A real-world example}\label{sec:real world example}

This section shows how to apply closed observables in a real-world example as introduced at the beginning in \S\ref{sec:intro_example}.
Safety at train stations can be lowered dramatically when a train with many passengers arrives on an already crowded platform. In this scenario, opening the doors of the train is dangerous, as the number of persons on the platform can reach critical levels and people might get pushed on tracks or suffocate due to high interpersonal pressure. For safety personnel, the evolution of the number of persons on the train and on the platform can be crucial to decide whether to open the doors or evacuate the platform. A model of this process can provides hints on the future evolution of passenger numbers.

Consider such a train station with one track and a platform (see Figure \ref{fig:real world station}). Passengers can leave the train through a door and then have to pass through waiting passengers on the platform to leave the station.
There are several possibilities to simulate this system at the macroscopic scale. One could use analytical techniques such as homogenization and averaging to scale up the differential equations describing individual behaviour. This would yield a macroscopic model, possibly also a differential equation, for the exit process out of the train.
Another approach is taken in the natural sciences. The microscopic simulator or a live experiment generates output data (number of persons in and out of the train at each second), starting with several different initial settings. A macroscopic model is then proposed and validated using the output.
Classical surrogate models would try to approximate the output either for each time step individually, or as an input-response system. For example, such a system could yield the mean evacuation time given the initial number of pedestrians inside and out of the train. For dynamic properties of the system, such as the outflow over time, an interpolation of all output would be necessary.

Both analytic and experimental approaches yield a macroscopic model, in most cases a system of equations. The approach of a surrogate model with closed observables yields a dynamical system that can be used in simulations. However, similar to classical surrogate models it is not present as equations, but numerical data and interpolating functions. This data is generated automatically by the process described in \S\ref{sec:algorithms}. The result is either more storage efficient than classical surrogate models (see Thm. \ref{thm:storagereduction}), or more accurate while using the same amount of storage.

Here, we describe how to construct the macroscopic model with closed observables. The following assumptions are used for the train setting:
\begin{description}
\item[Assumption 1] Passengers follow the rules of the Gradient Navigation Model \cite{dietrich-2014}.
\item[Assumption 2] Desired speeds approximately obey a normal distribution with mean $1.34m/s$ and standard deviation $0.26m/s$ (experimental values from \cite{weidmann-1992}).
\item[Assumption 3] Initially, the positions of passengers in the train and on the platform are  uniformly distributed. We use a five second starting phase for the distribution to settle in a state where all passengers on the platform assume their desired distances to others, and passengers in the train queue in front of the door.
\item[Assumption 4] Waiting passengers do not strongly react to the leaving passengers, but move away a little if distances are too small. By this assumption, we exclude psychological effects such as the formation of a passage way in front of the door.
\item[Assumption 5] Small differences in the initial positions of the passengers do not cause large differences in behavior on the system level. This allows us to start several simulation runs with the same initial numbers of passengers but small changes in positions on train and platform, and then average over the results.
\end{description}
We are interested in the number of passengers over time, both on the train and on the platform. A simulation with 42 passengers on the train and 48 passengers waiting on the platform results in the change of passengers depicted in Figure \ref{fig:real world station}. After 15 seconds, all 90 passengers are on the platform.

The scales in this scenario differ from $\mathcal{O}(10^{-1}s)$ to $\mathcal{O}(10^1s)$ on the temporal scale (reaction time of pedestrians is set to $0.5$ seconds, the analysis of the chance to get off the train is between $25$ and $50$ seconds), and from $\mathcal{O}(10^{-1}m)$ to $\mathcal{O}(10^1m)$  on the spatial scale (diameter of a pedestrian is $0.4m$, the platform is about $20m$ wide). This means that we use closed observables to capture a system that is $\mathcal{O}(10^2)$ slower and larger than the microscopic system that creates the output.

\begin{figure}[ht]
\centering
\includegraphics[width=0.45\textwidth]{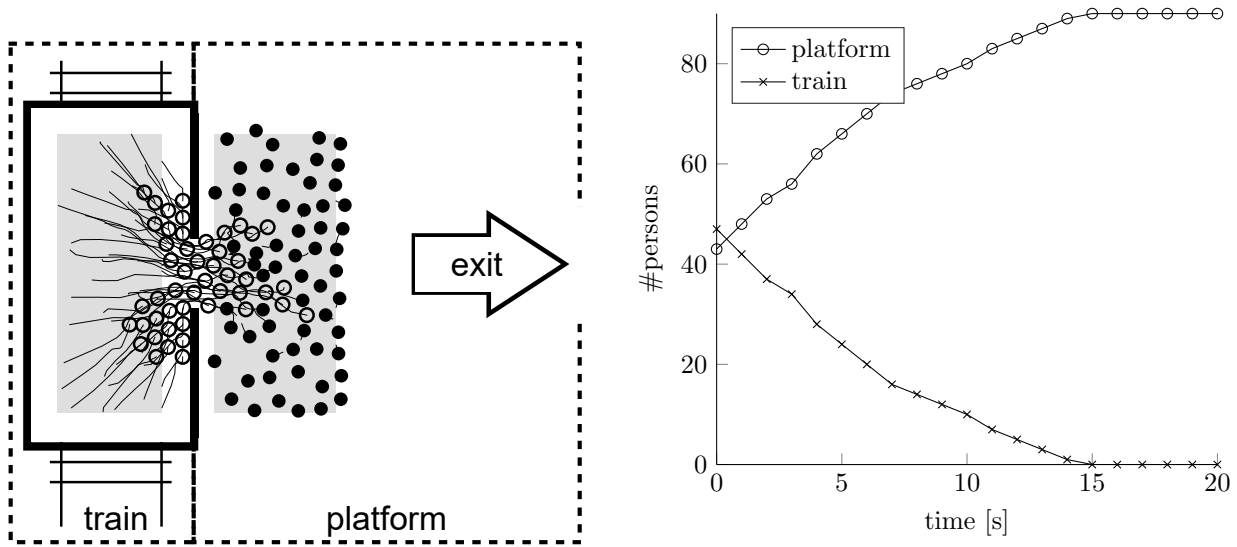}
\setlength\fheight{6cm}
\setlength\fwidth{7cm}
%
%
\begin{tikzpicture}

\begin{axis}[%
width=0.951\fwidth,
height=\fheight,
at={(0\fwidth,0\fheight)},
scale only axis,
xmin=0,
xmax=20,
xlabel={time [s]},
ymin=0,
ymax=90,
ylabel={\#persons},
axis background/.style={fill=white},
axis x line*=bottom,
axis y line*=left,
legend style={at={(0.03,0.97)},anchor=north west,legend cell align=left,align=left,draw=white!15!black}
]
\addplot [color=black,solid,mark=o,mark options={solid}]
  table[row sep=crcr]{%
0	43\\
1	48\\
2	53\\
3	56\\
4	62\\
5	66\\
6	70\\
7	74\\
8	76\\
9	78\\
10	80\\
11	83\\
12	85\\
13	87\\
14	89\\
15	90\\
16	90\\
17	90\\
18	90\\
19	90\\
20	90\\
21	90\\
};
\addlegendentry{platform};

\addplot [color=black,solid,mark=x,mark options={solid}]
  table[row sep=crcr]{%
0	47\\
1	42\\
2	37\\
3	34\\
4	28\\
5	24\\
6	20\\
7	16\\
8	14\\
9	12\\
10	10\\
11	7\\
12	5\\
13	3\\
14	1\\
15	0\\
16	0\\
17	0\\
18	0\\
19	0\\
20	0\\
21	0\\
};
\addlegendentry{train};

\end{axis}
\end{tikzpicture}%
\caption{\label{fig:real world station}Left: The train station with a train and the platform with waiting passengers. Persons stepping off the train (empty circles) have to leave to the right. We are interested in the numbers of persons over time, both on the train and on the platform. Right: Evolution of the passenger number in the train and on the platform. 48 persons start in the train, 42 on the platform. After 15 seconds, all 90 passengers are on the platform.}
\end{figure}

We denote the numbers of passenger on the train $N_T$ and on the platform $N_P$. In our example, passengers cannot leave the station. Also, passengers leaving the train arrive on the platform. These facts can be combined to form the continuity equation, which in our case is present in the discrete version. Consider the outflow $F_T$ of passengers from the train, then:
\begin{equation}\label{eq:real world discrete continuity}
N_P(t+1)-N_P(t) = F_T(t).
\end{equation}
The function $F_T(t)$ depends on interactions between passengers and the geometry and hence is very difficult to derive analytically. A typical solution in this case is to run simulations and estimate $F_T(t)$ for different numbers of passengers, which would need $\mathcal{O}(N^3)$ points for a sampling of the two parameters and time. With closed observables, the same accuracy can be achieved by storing $\mathcal{O}(N^2)$ points (see below).

To model $N_P$ and $N_T$ with closed observables, Eq. \ref{eq:real world discrete continuity} and the consideration of the related facts are not necessary. We only need the output of the simulations started for different numbers of pedestrians on the train and on the platform. As we draw the initial positions of the persons from a uniform distribution, the output is stochastic and we have to run several simulations for the same initial numbers $N_P(0),N_T(0)$ to get a good average evolution $\overline{N}_P(t),\overline{N}_T(t)$. Table \ref{tab:real world parameters} shows the parameters of the simulation runs.
\begin{table}[ht]
\footnotesize
\centering
\begin{tabular}{|l|c|}
\hline 
\textbf{Parameter} & \textbf{Values} \\ 
\hline 
Simulated time on the train station & 50 seconds \\ 
\hline 
$N_T(0)$: Initial numbers of passengers on the train & 10,20,30,40,50 \\ 
\hline 
$N_P(0)$: Initial numbers of passengers on the platform & 0,20,40,60,80,100,120,140,160,180,200 \\ 
\hline 
Runs per initial value pair $(N_P,N_T)$ & 10 \\ 
\hline 
T: number of observations to combine for numerical model & 15 \\
\hline
\end{tabular} 
\caption{\label{tab:real world parameters}Parameters for the simulations and numerical model construction. In total, $5\cdot 11\cdot 10=550$ simulations were started. The parameter $T$ shows how many observations are combined to form a time-lagged variable (closed observables). In the example, observations were one second apart, so that 15 second intervals are combined.}
\end{table}
The output $\overline{N}_P(t),\overline{N}_T(t)$ is used to construct the diffusion map space $D$. For notational convenience, we drop the braces denoting the mean from now on. The functions $\phi$, $G$ and $\tilde{\mu}$ are created using linear interpolation by MATLABs \texttt{scatteredInterpolant}. We use the algorithm outlined in Figure \ref{lst:algorithm}. Figure \ref{fig:real world G} depicts the interpolating surfaces for $\Delta G$. We need two since we employ two-dimensional closed observables, that means in this real world example,
\begin{equation}
(\phi_1,\phi_2)_{n+1}=(\phi_1,\phi_2)_{n}+\Delta G((\phi_1,\phi_2)_{n}).
\end{equation}

Since we use interpolation, the original output can be recreated up to high accuracy (see Figure \ref{fig:real world interpolation error}). For input not covered in the simulations, the trajectories generated by the numerical model closely predict the system behavior. Figure \ref{fig:real world trajectory} shows a comparison between a predicted trajectory and three corresponding simulations not used for model construction ($N_P(0)=60$, $N_T(0)=35$). Note that the prediction is for the mean of the simulation runs.

\begin{figure}[ht]
\centering
\includegraphics[width=1.0\textwidth]{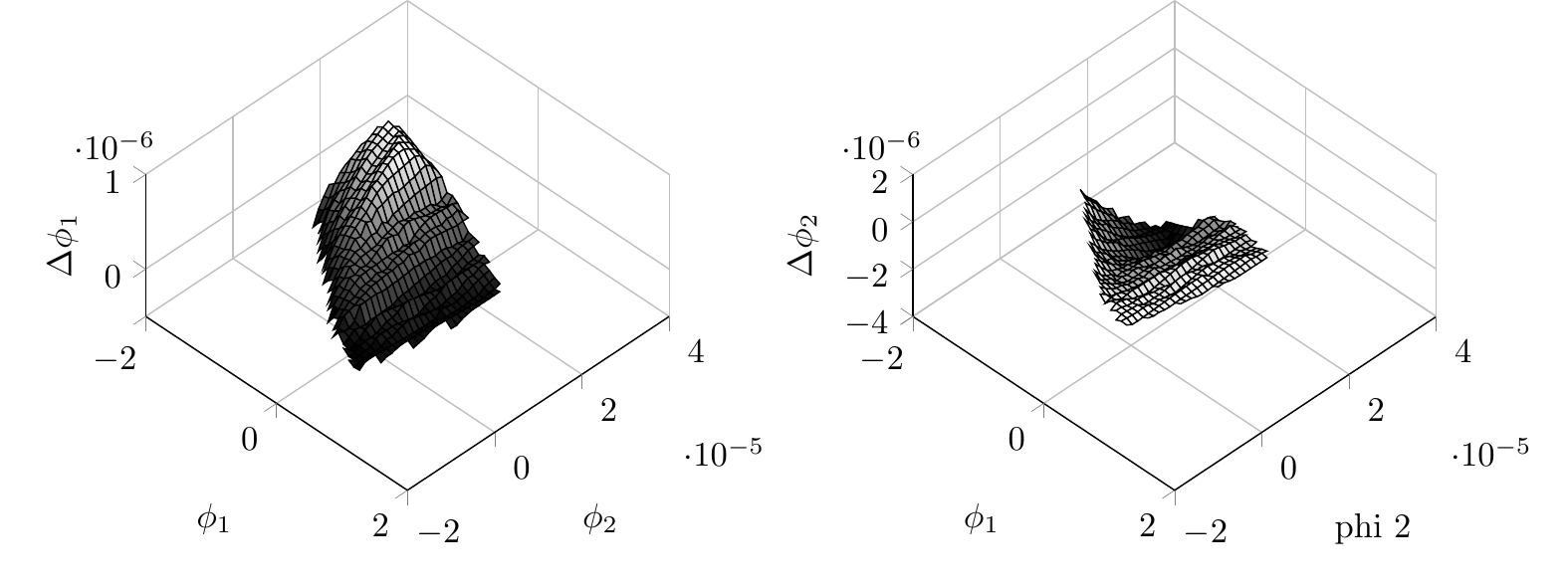}
\caption{\label{fig:real world G}Evolution function $\Delta G(\phi_1,\phi_2)=(\Delta \phi_1,\Delta \phi_2)$ in the form of two functions $\Delta \phi_1=G-\phi_1$, $\Delta \phi_2=G-\phi_2$ on closed observables $\phi_1$ and $\phi_2$.}
\end{figure}
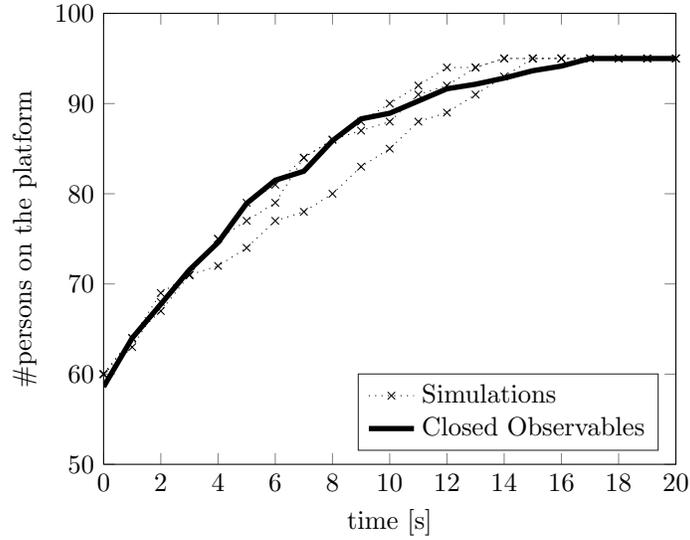
\begin{figure}[ht]
\centering
\setlength\fheight{6cm}
\setlength\fwidth{8cm}
%
%
\begin{tikzpicture}

\begin{axis}[%
width=0.951\fwidth,
height=\fheight,
at={(0\fwidth,0\fheight)},
scale only axis,
xmin=0,
xmax=20,
xlabel={time [s]},
ymin=50,
ymax=100,
ylabel={\#persons on the platform},
axis background/.style={fill=white},
legend style={at={(0.97,0.03)},anchor=south east,legend cell align=left,align=left,draw=white!15!black}
]
\addplot [color=black,dotted,mark=x,mark options={solid}]
  table[row sep=crcr]{%
0	60\\
1	64\\
2	69\\
3	71\\
4	75\\
5	79\\
6	81\\
7	84\\
8	86\\
9	87\\
10	88\\
11	91\\
12	92\\
13	94\\
14	95\\
15	95\\
16	95\\
17	95\\
18	95\\
19	95\\
20	95\\
21	95\\
};
\addlegendentry{Simulations};

\addplot [color=black,dotted,mark=x,mark options={solid},forget plot]
  table[row sep=crcr]{%
0	60\\
1	63\\
2	68\\
3	71\\
4	75\\
5	77\\
6	79\\
7	84\\
8	86\\
9	88\\
10	90\\
11	92\\
12	94\\
13	94\\
14	95\\
15	95\\
16	95\\
17	95\\
18	95\\
19	95\\
20	95\\
21	95\\
};
\addplot [color=black,dotted,mark=x,mark options={solid},forget plot]
  table[row sep=crcr]{%
0	60\\
1	64\\
2	67\\
3	71\\
4	72\\
5	74\\
6	77\\
7	78\\
8	80\\
9	83\\
10	85\\
11	88\\
12	89\\
13	91\\
14	93\\
15	95\\
16	95\\
17	95\\
18	95\\
19	95\\
20	95\\
21	95\\
};
\addplot [color=black,solid,line width=2.0pt]
  table[row sep=crcr]{%
0	58.5921879807271\\
1	63.9989969460761\\
2	67.7559169965179\\
3	71.5533072848881\\
4	74.5932741888738\\
5	78.9347162090916\\
6	81.5000774292062\\
7	82.4999737184106\\
8	85.8803971598153\\
9	88.3043677950404\\
10	88.9262351192468\\
11	90.2838124839726\\
12	91.6413898486984\\
13	92.1412177176025\\
14	92.8255237010887\\
15	93.6407013243145\\
16	94.1547198174172\\
17	94.9982786890404\\
18	94.9982786890404\\
19	94.9982786890404\\
20	94.9982786890404\\
};
\addlegendentry{Closed Observables};

\end{axis}
\end{tikzpicture}%
\caption{\label{fig:real world trajectory}A prediction (bold line) of the average number of persons on the platform by the constructed model, compared to three simulation runs (dotted). The incorrect starting values for the prediction results from interpolation errors: the given initial values $(N_P(0),N_T(0))=(60,35)$ are transformed by $\phi$ into the space $D$ and after the simulation recreated by observation with $\tilde{y}$.}
\end{figure}

An analysis of the number of passengers getting off the train in the simulated $25$ or $50$ seconds yields the result shown in Figure \ref{fig:real world analysis}. We compute the mean chance $c$ for one pedestrian to get off the train in $n$ seconds by
\begin{equation}
c(n)=1-\frac{\min_{t\leq n} N_T(t)}{\max_{t\leq n} N_T(t)}=1-\frac{\min_{t\leq n} N_T(t)}{N_T(0)}.
\end{equation}
The chance to get off the train in time decreases with the number of passengers waiting on the platform. Running the necessary 400 simulations took about two seconds, whereas the generation of the data with the original model would last for hours. It would also need 10 simulation runs per data point to generate the average value, resulting in 4000 simulations for the same result.

\begin{figure}[ht]
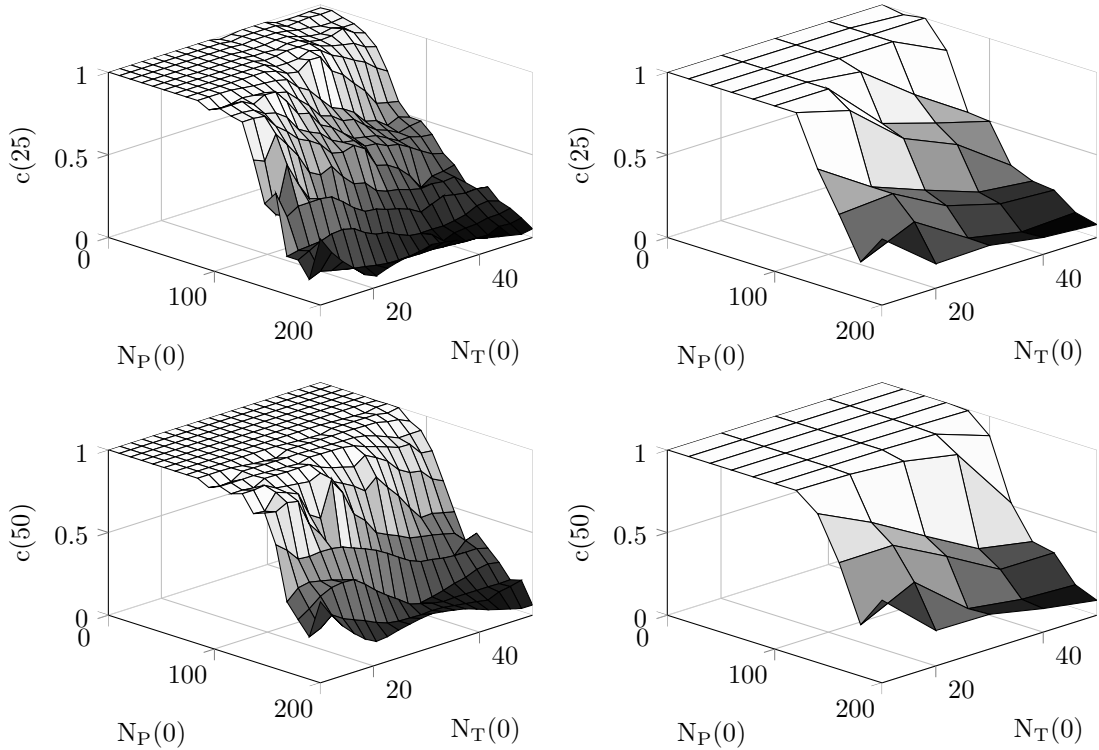

\centering
\setlength\fheight{4cm}
\setlength\fwidth{6cm}
\input{rwsa25.tex}
\setlength\fheight{4cm}
\setlength\fwidth{6cm}
%
%
\begin{tikzpicture}

\begin{axis}[%
width=0.951\fwidth,
height=\fheight,
at={(0\fwidth,0\fheight)},
scale only axis,
xmin=0,
xmax=200,
tick align=outside,
xlabel={$\text{N}_\text{P}\text{(0)}$},
xmajorgrids,
ymin=10,
ymax=50,
ylabel={$\text{N}_\text{T}\text{(0)}$},
ymajorgrids,
zmin=0,
zmax=1,
zlabel={c(25)},
zmajorgrids,
view={45}{30},
axis background/.style={fill=white},
axis x line*=bottom,
axis y line*=left,
axis z line*=left
]

\addplot3[%
surf,
shader=flat corner,draw=black,z buffer=sort,colormap/blackwhite,mesh/rows=11]
table[row sep=crcr, point meta=\thisrow{c}] {%
x	y	z	c\\
0	10	1	1\\
0	20	1	1\\
0	30	1	1\\
0	40	1	1\\
0	50	1	1\\
20	10	1	1\\
20	20	1	1\\
20	30	1	1\\
20	40	1	1\\
20	50	0.978723404255319	0.978723404255319\\
40	10	1	1\\
40	20	1	1\\
40	30	1	1\\
40	40	1	1\\
40	50	1	1\\
60	10	1	1\\
60	20	1	1\\
60	30	1	1\\
60	40	1	1\\
60	50	0.84468085106383	0.84468085106383\\
80	10	1	1\\
80	20	1	1\\
80	30	0.960714285714286	0.960714285714286\\
80	40	0.721052631578947	0.721052631578947\\
80	50	0.51063829787234	0.51063829787234\\
100	10	1	1\\
100	20	1	1\\
100	30	0.689285714285714	0.689285714285714\\
100	40	0.589473684210526	0.589473684210526\\
100	50	0.497872340425532	0.497872340425532\\
120	10	1	1\\
120	20	0.910526315789474	0.910526315789474\\
120	30	0.639285714285714	0.639285714285714\\
120	40	0.5	0.5\\
120	50	0.272340425531915	0.272340425531915\\
140	10	0.7	0.7\\
140	20	0.49	0.49\\
140	30	0.367857142857143	0.367857142857143\\
140	40	0.253846153846154	0.253846153846154\\
140	50	0.218367346938775	0.218367346938775\\
160	10	0.466666666666667	0.466666666666667\\
160	20	0.489473684210526	0.489473684210526\\
160	30	0.314285714285714	0.314285714285714\\
160	40	0.223684210526316	0.223684210526316\\
160	50	0.204255319148936	0.204255319148936\\
180	10	0.22	0.22\\
180	20	0.36	0.36\\
180	30	0.16	0.16\\
180	40	0.0950000000000001	0.0950000000000001\\
180	50	0.110204081632653	0.110204081632653\\
200	10	0.4	0.4\\
200	20	0.147619047619048	0.147619047619048\\
200	30	0.156666666666667	0.156666666666667\\
200	40	0.1025	0.1025\\
200	50	0.0784313725490197	0.0784313725490197\\
};
\end{axis}
\end{tikzpicture}%
\setlength\fheight{4cm}
\setlength\fwidth{6cm}
\input{rwsa50.tex}
\setlength\fheight{4cm}
\setlength\fwidth{6cm}
%
%
\begin{tikzpicture}

\begin{axis}[%
width=0.951\fwidth,
height=\fheight,
at={(0\fwidth,0\fheight)},
scale only axis,
xmin=0,
xmax=200,
tick align=outside,
xlabel={$\text{N}_\text{P}\text{(0)}$},
xmajorgrids,
ymin=10,
ymax=50,
ylabel={$\text{N}_\text{T}\text{(0)}$},
ymajorgrids,
zmin=0,
zmax=1,
zlabel={c(50)},
zmajorgrids,
view={45}{30},
axis background/.style={fill=white},
axis x line*=bottom,
axis y line*=left,
axis z line*=left
]

\addplot3[%
surf,
shader=flat corner,draw=black,z buffer=sort,colormap/blackwhite,mesh/rows=11]
table[row sep=crcr, point meta=\thisrow{c}] {%
x	y	z	c\\
0	10	1	1\\
0	20	1	1\\
0	30	1	1\\
0	40	1	1\\
0	50	1	1\\
20	10	1	1\\
20	20	1	1\\
20	30	1	1\\
20	40	1	1\\
20	50	1	1\\
40	10	1	1\\
40	20	1	1\\
40	30	1	1\\
40	40	1	1\\
40	50	1	1\\
60	10	1	1\\
60	20	1	1\\
60	30	1	1\\
60	40	1	1\\
60	50	1	1\\
80	10	1	1\\
80	20	1	1\\
80	30	1	1\\
80	40	0.997368421052632	0.997368421052632\\
80	50	0.991489361702128	0.991489361702128\\
100	10	1	1\\
100	20	1	1\\
100	30	1	1\\
100	40	0.992105263157895	0.992105263157895\\
100	50	0.880851063829787	0.880851063829787\\
120	10	1	1\\
120	20	0.994736842105263	0.994736842105263\\
120	30	0.971428571428571	0.971428571428571\\
120	40	0.910526315789474	0.910526315789474\\
120	50	0.517021276595745	0.517021276595745\\
140	10	0.9	0.9\\
140	20	0.745	0.745\\
140	30	0.560714285714286	0.560714285714286\\
140	40	0.382051282051282	0.382051282051282\\
140	50	0.322448979591837	0.322448979591837\\
160	10	0.644444444444444	0.644444444444444\\
160	20	0.647368421052632	0.647368421052632\\
160	30	0.478571428571429	0.478571428571429\\
160	40	0.297368421052632	0.297368421052632\\
160	50	0.295744680851064	0.295744680851064\\
180	10	0.31	0.31\\
180	20	0.465	0.465\\
180	30	0.203333333333333	0.203333333333333\\
180	40	0.1575	0.1575\\
180	50	0.142857142857143	0.142857142857143\\
200	10	0.5	0.5\\
200	20	0.214285714285714	0.214285714285714\\
200	30	0.21	0.21\\
200	40	0.1425	0.1425\\
200	50	0.0901960784313726	0.0901960784313726\\
};
\end{axis}
\end{tikzpicture}%
\caption{\label{fig:real world analysis}Left column: Prediction and analysis with the numerical model constructed with closed observables. The chance of passengers getting off the train in 25 seconds (top row) and 50 seconds (bottom row) decreases with the number of passengers waiting on the platform. Right column: The same analysis performed on the output used to construct the numerical model. Small features in the surface are not visible.}
\end{figure}

The real-world example shows where closed observables are advantageous. Compared to the original model, creating a model of the process of stepping of the train is much more efficient computationally than the original, for both analysis and prediction (see Figure \ref{fig:real world analysis}). Compared to storage of the full simulation output, it is also much more efficient. Theorem \ref{thm:storagereduction} states that in this example the constructed model is exponentially more efficient regarding data storage, as the input space is two-dimensional ($\dim A_0=2$) and the closed observables also have two dimensions ($\dim D=2$).
A response surface common in surrogate modeling cannot yield the same result, because it would be constructed only for certain end times (for example 25 or 50 seconds). The model constructed with closed observables can produce results up to any given end time.

\FloatBarrier
\section{Conclusion\label{sec:conclusion}}

Many systems often show interesting dynamics on multiple temporal and spatial scales. If such a system is present on a fine scale, generating a model for the same process but a coarser  time scale is difficult but necessary to perform analysis, optimization and control.
We introduced the concept of numerical models with closed observables on a coarse scale and showed how observations of a system can be used to construct it.

In case the number of dynamically meaningful variables, called closed observables, is not higher than the number of input parameters, constructing the numerical model is more efficient than storing and interpolating the output (Theorem \ref{thm:storagereduction}). This can easily make the difference between the memory capacity of a smartphone and a supercomputer (see example \ref{sec:examples2}).

Two academic examples illustrated the features of our approach, from initial value mapping to storage reduction and finite system construction of PDEs. A real-world example showed that numerical model construction can extract the dynamics of macroscopic systems from fine scale simulations, with performance gains of four orders of magnitude.

Future work will focus on two major areas: a search for analytic forms of closed observables, and scaling up stochastic, fine scale systems.
Numerical models can be used in various fields of numerical analysis. In network optimization, fine scale simulations could now be used to build models on the vertices or edges of the network. Systematic upscaling \cite{brandt-2011} needs models on several scales, which could be simplified with the numerical model construction ideas presented here.
Uncertainty quantification can now be split into a construction phase, where the numerical model on the scale of interest is constructed with the fine scale simulator, and an analysis phase, where the high performance of the numerical model can be used to perform quantification of uncertainties without running the original simulator.

When enough experimental data is available, one could also construct numerical models directly from the observations and in this way extract macroscopic dynamics automatically from experiments.




\appendix
\section{Lemmas, Proofs}\label{sec:appendix lemmas}

\begin{lemma}\label{lemma:approximation}
Consider two vector spaces $A$ and $D$. Let $G\in C^2(D,D)$ and $\phi\in C^2(A,D)$. Consider the interpolants of these functions, $G_I\in C^2(D,D)$ and $\phi_I\in C^2(A,D)$, with interpolation errors $E_G(\phi):=G_I(\phi)-G(\phi)$ and $E_\phi(a):=\phi_I(a)-\phi(a)$. We denote $E_G:=\max_{\phi\in D}E_G(\phi)$ and $E_\phi:=\max_{a\in A}E_\phi(a)$.
Also, let $\|DG\|$ be bounded by $M\in(0,1)$, so that $M\geq \sup_\phi\|DG(\phi)\|$.
Then, for a given $a\in A$ and a fixed $n\in\mathbb{N}$,
\begin{equation}
\|G^n(\phi (a))-G_I^n(\phi_I (a))\|\leq C_1\left(\frac{1-M^{n+1}}{1-M}\right)\|E_G\|+C_2(M^{n}\|E_\phi\|)
\end{equation}
where the constants $C_1,C_2$ are positive and do not depend on $a$ and $n$.
\end{lemma}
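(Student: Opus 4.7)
The plan is to convert the iterated composition into a linear recurrence on the one-step error, solve that recurrence explicitly, and use the bound $\|DG\|\leq M$ as the contraction constant.

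First I would introduce the two trajectories $\psi_k := G^k(\phi(a))$ (the exact orbit) and $\tilde\psi_k := G_I^k(\phi_I(a))$ (the numerical orbit), together with their difference $e_k := \tilde\psi_k - \psi_k$. The initial error is $e_0 = \phi_I(a)-\phi(a) = E_\phi(a)$, so $\|e_0\|\leq \|E_\phi\|$. For the one-step update I would split
\begin{equation}
e_{k+1}=G_I(\tilde\psi_k)-G(\psi_k)=\underbrace{[G_I(\tilde\psi_k)-G(\tilde\psi_k)]}_{=E_G(\tilde\psi_k)}+\underbrace{[G(\tilde\psi_k)-G(\psi_k)]}_{\text{apply MVT}}.
\end{equation}
The first bracket is bounded in norm by $\|E_G\|$ by definition. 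For the second bracket, since $G\in C^2(D,D)$ and $\sup_\phi\|DG(\phi)\|\leq M$, the mean value inequality (integrating $DG$ along the segment from $\psi_k$ to $\tilde\psi_k$) gives $\|G(\tilde\psi_k)-G(\psi_k)\|\leq M\|e_k\|$.

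Combining these two estimates yields the scalar recurrence
\begin{equation}
\|e_{k+1}\|\ \leq\ M\,\|e_k\|+\|E_G\|,\qquad \|e_0\|\leq \|E_\phi\|.
\end{equation}
This is a standard linear inhomogeneous recurrence whose solution is
\begin{equation}
\|e_n\|\ \leq\ M^n\|E_\phi\|+\|E_G\|\sum_{k=0}^{n-1}M^k\ =\ M^n\|E_\phi\|+\frac{1-M^n}{1-M}\|E_G\|,
\end{equation}
which is precisely the bound in the statement (with $C_2=1$ and $C_1=1$; the slightly looser constant $\frac{1-M^{n+1}}{1-M}$ only absorbs a harmless $M^n\|E_G\|$ term, or one can trivially enlarge $C_1$). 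Since $M\in(0,1)$, both coefficients are uniformly bounded in $n$, and since the derivation never used anything about $a$ beyond $\|e_0\|\leq\|E_\phi\|$, the constants $C_1,C_2$ are independent of $a$ and $n$.

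The argument has essentially no obstacle: the work is in cleanly separating the ``interpolation error at one step'' piece from the ``propagation of the previous error through a Lipschitz $G$'' piece. The only place that deserves a sentence of care is the application of the mean value inequality — one should note that $G$ is $C^2$ on the vector space $D$ with a uniform bound on $DG$, so the integral form $G(\tilde\psi_k)-G(\psi_k)=\int_0^1 DG(\psi_k+s\,e_k)\,e_k\,ds$ is valid and yields the Lipschitz estimate on all of $D$; no local restriction is needed. Everything else is bookkeeping of a geometric series.
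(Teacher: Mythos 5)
Your proof is correct, and it takes a genuinely different route from the paper's. The paper proves the lemma by Taylor-decomposing the interpolants and pushing the expansion through the composition: it first bounds $\|G_I^n(\phi)\|$ against $\|G^n(\phi)\|$ by recursively extracting one $E_G$ term per step (each picking up factors of $\|DG\|\leq M$, summed as a geometric series, with second-order remainders $\mathcal{O}(\|E_G\|^2)$ discarded under the standing smallness assumption), then separately bounds $\|G^n(\phi_I(a))\|$ against $\|G^n(\phi(a))\|$ to handle the initial error $E_\phi$, and finally combines the two estimates. You instead set up a single unified error recurrence $\|e_{k+1}\|\leq M\|e_k\|+\|E_G\|$ with $\|e_0\|\leq\|E_\phi\|$, obtained by splitting one step into the interpolation error at the current numerical iterate plus the propagation of the previous error through $G$ via the mean value inequality, and then solve the recurrence exactly. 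Your approach buys several things: it needs only a uniform bound on $DG$ (so $C^1$ regularity suffices, and no assumption that errors are small enough for quadratic terms to vanish); it yields explicit constants $C_1=C_2=1$ rather than unspecified ones hidden in $\mathcal{O}(\cdot)$ bookkeeping; and it directly bounds the difference $\|G^n(\phi(a))-G_I^n(\phi_I(a))\|$, which is literally what the lemma asserts, whereas the paper's displayed inequalities compare norms of iterates and treat the two perturbations ($\phi\to\phi_I$ and $G\to G_I$) in separate computations whose final combination is stated rather than derived. What the paper's Taylor route buys in exchange is an explicit view of where the second-order terms arise, which matches the narrative of its Theorem 2 ("errors are small, so that multiplications of the errors vanish"); but as a self-contained proof of the lemma, your discrete-Gr\"onwall argument is tighter and more rigorous.
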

\begin{proof}
The Taylor-decomposition of a vector-valued $C^2$ function $f$ at a point $x+\epsilon$ with small $\|\epsilon\|$ is given by
\begin{equation}
f(x+\epsilon)=f(x)+ DF(x) \cdot \epsilon+\mathcal{O}(\|\epsilon\|^2).
\end{equation}
This approximation is the basis for the following arguments. For any $\phi\in D$ and $n\geq 1$,
\begin{eqnarray*}
\|G_I^n(\phi)\|&=&\|G_I^{n-1}\circ[G(\phi)+E_G(\phi)]\|\\
&=&\|G_I^{n-2}\circ[G^2(\phi)+DG(G(\phi))\cdot E_G(\phi)+\mathcal{O}(\|E_G(\phi)\|^2)+E_G(\phi)]\|\\
&=&\|G_I^{n-2}\circ[G^2(\phi)+\mathcal{O}((M+1)\|E_G(\phi)\|)+\mathcal{O}(\|E_G(\phi)\|^2)]\|\\
&=&\|G_I^{n-3}\circ[G^3(\phi)+\mathcal{O}((M(M+1)+1)\|E_G(\phi)\|)+\mathcal{O}(\|E_G(\phi)\|^2)]\|\\
&=&\dots\\
&=&\|G^n(\phi)+\mathcal{O}(\sum_{i=0}^nM^i\|E_G(\phi)\|)+\mathcal{O}(\|E_G(\phi)\|^2)\|.
\end{eqnarray*}
With $M\in(0,1)$, we thus have
\begin{equation}
\|G_I^n(\phi)\|\leq\|G^n(\phi)+\mathcal{O}\left(\frac{1-M^{n+1}}{1-M}\|E_G(\phi)\|\right)\|
\end{equation}
Similarly, for any $a\in A$,
\begin{eqnarray*}
\|G^n(\phi_I(a))\|&=&\|[G^{n-1}\circ G](\phi(a)+E_\phi(a))\|\\
&=&\|G^{n-1}\circ[G(\phi(a))+DG(\phi(a))\cdot E_\phi(a)+\mathcal{O}(\|E_\phi(a)\|^2)]\|\\
&=&\|G^{n-2}\circ[G^2(\phi(a))+DG(\phi(a))\cdot \mathcal{O}(M\|E_\phi(a)\|)+\mathcal{O}(\|E_\phi(a)\|^2)]\|\\
&=&\dots\\
&\leq&\|G^n(\phi(a))\|+\mathcal{O}(M^{n}\|E_\phi\|).
\end{eqnarray*}
Note that for $M\in(0,1)$, the right side converges to $\|G^n(\phi(a))\|$ if $n\to\infty$.
Combining the two approximations, we conclude that for any $a\in A$ and $M\in(0,1)$,
\begin{eqnarray*}
\|G_I^n(\phi_I(a))\|&\leq&\|G^n(\phi(a))+\mathcal{O}\left(\frac{1-M^{n+1}}{1-M}\|E_G\|\right)+\mathcal{O}(M^{n}\|E_\phi\|)\|\\
&=&\|G^n(\phi(a))\|+C_1\left(\frac{1-M^{n+1}}{1-M}\right)\|E_G\|+C_2 M^{n}\|E_\phi\|
\end{eqnarray*}
for constants $C_1,C_2>0$ independent of $a$ and $n$.
\qquad\end{proof}

\section{Figures}

\begin{figure}[ht!]
    \centering
    \begin{subfigure}[t]{0.45\textwidth}
        \centering
\setlength\fheight{5cm}
\setlength\fwidth{6cm}
%
%
\begin{tikzpicture}

\begin{axis}[%
width=0.75\fwidth,
height=\fheight,
at={(0\fwidth,0\fheight)},
scale only axis,
point meta min=0.00193045413622771,
point meta max=0.982835989757635,
axis on top,
xmin=-0.00122448979591837,
xmax=0.0477551020408163,
xlabel={time},
ymin=-0.526315789473684,
ymax=0.526315789473684,
ylabel={space},
axis background/.style={fill=white},
colormap={mymap}{[1pt] rgb(0pt)=(1,1,1); rgb(99pt)=(0,0,0)},
colorbar
]
\addplot [forget plot] graphics [xmin=-0.00122448979591837,xmax=0.0477551020408163,ymin=-0.526315789473684,ymax=0.526315789473684] {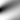};
\end{axis}
\end{tikzpicture}%
        \caption{Diffusion $c=1.4$, advection $d=2$}
    \end{subfigure}%
    ~ 
    \begin{subfigure}[t]{0.45\textwidth}
        \centering
\setlength\fheight{5cm}
\setlength\fwidth{6cm}
%
%
\begin{tikzpicture}

\begin{axis}[%
width=0.75\fwidth,
height=\fheight,
at={(0\fwidth,0\fheight)},
scale only axis,
point meta min=0.00193045413622771,
point meta max=0.982835989757635,
axis on top,
xmin=-0.00122448979591837,
xmax=0.0477551020408163,
xlabel={time},
ymin=-0.526315789473684,
ymax=0.526315789473684,
ylabel={space},
axis background/.style={fill=white},
colormap={mymap}{[1pt] rgb(0pt)=(1,1,1); rgb(99pt)=(0,0,0)},
colorbar
]
\addplot [forget plot] graphics [xmin=-0.00122448979591837,xmax=0.0477551020408163,ymin=-0.526315789473684,ymax=0.526315789473684] {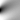};
\end{axis}
\end{tikzpicture}%
        \caption{Diffusion $c=3$, advection $d=2$}
    \end{subfigure}
    ~ 
    \begin{subfigure}[t]{0.45\textwidth}
        \centering
\setlength\fheight{5cm}
\setlength\fwidth{6cm}
%
%
\begin{tikzpicture}

\begin{axis}[%
width=0.75\fwidth,
height=\fheight,
at={(0\fwidth,0\fheight)},
scale only axis,
point meta min=0.00193045413622771,
point meta max=0.982835989757635,
axis on top,
xmin=-0.00122448979591837,
xmax=0.0477551020408163,
xlabel={time},
ymin=-0.526315789473684,
ymax=0.526315789473684,
ylabel={space},
axis background/.style={fill=white},
colormap={mymap}{[1pt] rgb(0pt)=(1,1,1); rgb(99pt)=(0,0,0)},
colorbar
]
\addplot [forget plot] graphics [xmin=-0.00122448979591837,xmax=0.0477551020408163,ymin=-0.526315789473684,ymax=0.526315789473684] {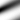};
\end{axis}
\end{tikzpicture}%
        \caption{Diffusion $c=1.4$, advection $d=6$}
    \end{subfigure}
    ~ 
    \begin{subfigure}[t]{0.45\textwidth}
        \centering
\setlength\fheight{5cm}
\setlength\fwidth{6cm}
%
%
\begin{tikzpicture}

\begin{axis}[%
width=0.75\fwidth,
height=\fheight,
at={(0\fwidth,0\fheight)},
scale only axis,
point meta min=0.00193045413622771,
point meta max=0.982835989757635,
axis on top,
xmin=-0.00122448979591837,
xmax=0.0477551020408163,
xlabel={time},
ymin=-0.526315789473684,
ymax=0.526315789473684,
ylabel={space},
axis background/.style={fill=white},
colormap={mymap}{[1pt] rgb(0pt)=(1,1,1); rgb(99pt)=(0,0,0)},
colorbar
]
\addplot [forget plot] graphics [xmin=-0.00122448979591837,xmax=0.0477551020408163,ymin=-0.526315789473684,ymax=0.526315789473684] {dwa4-1.png};
\end{axis}
\end{tikzpicture}%
        \caption{Diffusion $c=3$, advection $d=6$}
    \end{subfigure}
    \caption{\label{fig:diffusion x all}Results for different values of diffusion and advection constants in the PDE example \S\ref{sec:examplesPDE}. Time is oriented horizontally, space vertically.}
\end{figure}

\begin{figure}[h!]
\centering
\setlength\fheight{4cm}
\setlength\fwidth{0.95\textwidth}
\input{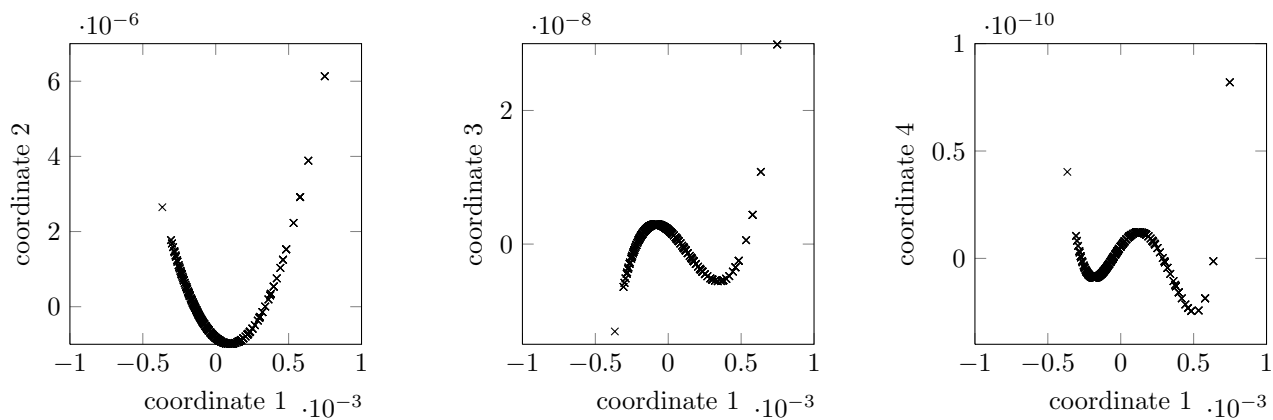}
\caption{\label{fig:coordinates_system2}Reduced system coordinates relative to the first. The second, third and fourth coordinates can be expressed using the first, so they do not contribute any additional information to the system.}
\end{figure}

\begin{figure}[ht]
\centering
\setlength\fheight{0.5\textwidth}
\setlength\fwidth{0.5\textwidth}
\input{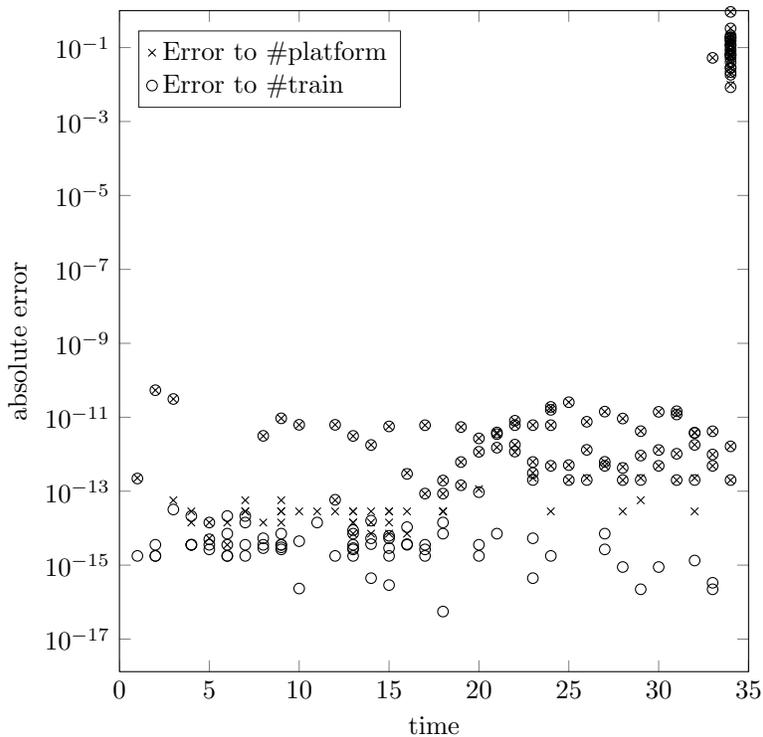}
\caption{\label{fig:real world interpolation error}Error between the predictions of the numerical model and the observations used to construct it. For all but the last time steps (where interpolation is no longer valid), the error is at the order of machine precision.}
\end{figure}

\FloatBarrier
\section{Acknowledgments}
The authors would like to thank Tobias Neckel for fruitful discussions about multiscale modeling. Support from the TopMath Graduate Center of TUM Graduate School at Technische Universit\"{a}t M\"{u}nchen, Germany is gratefully acknowledged.

\bibliographystyle{siam}
\bibliography{Literature}

\end{document}